\newtheorem{theorem}{Theorem}
\newtheorem{lemma}[theorem]{Lemma}
\newtheorem{corollary}[theorem]{Corollary}
\newtheorem{proposition}[theorem]{Proposition}
\newtheorem{definition}[theorem]{Definition}
\newtheorem{question}[theorem]{Question}
\newcommand{\Z}{\mathbb{Z}}
\renewcommand{\r}{\mathrm}
\newcommand{\F}{\mathcal{F}}
\newcommand{\lang}{\begin{picture}(5,7)
\put(1.2,2.5){\rotatebox{45}{\line(1,0){6.0}}}
\put(1.2,2.5){\rotatebox{315}{\line(1,0){6.0}}}
\end{picture}\kern.16em}
\newcommand{\rang}{\kern.1em\begin{picture}(5,7)
\put(.1,2.5){\rotatebox{135}{\line(1,0){6.0}}}
\put(.1,2.5){\rotatebox{225}{\line(1,0){6.0}}}
\end{picture}}
\begin{document}

\begin{center}
\texttt{Comments, corrections,
and related references welcomed, as always!}\\[.5em]
{\TeX}ed \today
\vspace{2em}
\end{center}

\title[Strong inner inverses]%
{Strong inner inverses in endomorphism rings\\ of vector spaces}%
\thanks{\url{http://arxiv.org/abs/1611.00972}\,.\\
\hspace*{1.6em}After publication of this note,
updates, errata, related references etc., if found, will be recorded at
\url{http://math.berkeley.edu/~gbergman/papers/}\,.
}

\subjclass[2010]{Primary: 16E50, 16S50, 16U99, 20M18.
Secondary: 16S15, 16S36.}
\keywords{endomorphism ring of a vector space;
inner inverse to a ring element; inverse monoid.}

\author{George M. Bergman}
\address{University of California\\
Berkeley, CA 94720-3840, USA}
\email{gbergman@math.berkeley.edu}

\begin{abstract}
For $V$ a vector space over a field, or more generally, over a
division ring, it is well-known that every $x\in\r{End}(V)$
has an {\em inner inverse}; that is, that there exists
$y\in\r{End}(V)$ satisfying $xyx=x.$
We show here that a large class of
such $x$ have inner inverses $y$ that satisfy
with $x$ an infinite family
of additional monoid relations, making the monoid generated by
$x$ and $y$ what is known as an {\em inverse monoid}
(definition recalled).
We obtain consequences of these relations, and related results.

P.\,Nielsen and J.\,\v{S}ter~\cite{PN+JvS} show that a much
larger class of elements $x$ of rings $R,$ including all elements
of von Neumann regular rings, have inner inverses satisfying
arbitrarily large {\em finite} subsets of the abovementioned set of
relations.
But we show by example that the
endomorphism ring of any infinite-dimensional vector space
contains elements having no inner inverse that
simultaneously satisfies all those relations.

A tangential result gives a condition on an endomap $x$ of a
set $S$ that is necessary and sufficient for $x$ to have a strong
inner inverse in the monoid of all endomaps of $S.$
\end{abstract}
\maketitle

\section{Background}\label{S.bkgd}

A basic property of the endomorphism ring $R=\r{End}(V)$ of a
vector space $V$ over a division
ring is that for every $x\in R,$ there exists a $y\in R$ such that
\begin{equation}\begin{minipage}[c]{35pc}\label{d.xyx=x}
$xyx\ =\ x.$
\end{minipage}\end{equation}
Such a $y$ is called an ``inner inverse'' to $x.$
(Note that for any endomap $x$ of any set $S,$
an inner inverse to $x,$ i.e., an endomap $y$ of $S$
which satisfies~\eqref{d.xyx=x},
is simply a map that carries every element of the image of $x$
to a preimage of itself under $x.$
For $V$ a vector space and $x\in\r{End}(V),$
an inner inverse to $x$ in $\r{End}(V)$ can be constructed
by mapping each member of a vector-space basis $B_0$ of $x(V)$ to
a preimage of itself, and acting arbitrarily on the remaining
elements of a basis $B\supseteq B_0$ of $V.)$
If $R$ is a ring and $y\in R$ is an inner inverse of
$x\in R,$ we see from~\eqref{d.xyx=x} that
$xy$ and $yx$ are idempotent elements of $R.$

A ring such as $\r{End}(V)$ in which every element has an
inner inverse is called {\em von Neumann regular},
often shortened to {\em regular}.

In general, if $y$ is an inner inverse to $x,$ this does not
make $x$ an inner inverse to $y.$
(For instance, every element of a ring is an inner inverse to
the element $0.)$
However, if an element $x$ has an inner inverse
$y_0,$ it also has an inner inverse $y$ to which it is, itself,
an inner inverse, namely $y=y_0 xy_0.$

I had naively thought that $yxy=y$ was the strongest additional
relation one could hope to ask of an inner inverse $y$
to an element $x$ in a general regular ring.
Hence I was surprised to see, in an unpublished note
by Kevin O'Meara, a construction, from any $x$ in such a ring, of
an inner inverse $y$ which not only satisfies this additional relation,
but also the relation saying that the idempotents $xy$ and
$yx$ commute.
Note that this commutativity relation, $xyyx=yxxy,$ implies that
$xxyyxx=x(yxxy)x=(xyx)(xyx)=xx;$ in other words, that
$y^2$ is an inner inverse to $x^2;$ and by the symmetry of the
relations satisfied, $x^2$ is also an inner inverse to $y^2.$

This suggested that one try to obtain an inner inverse
$y$ to $x$ that satisfied further relations making $y^n$ an inner
inverse to $x^n$ for higher $n.$
And indeed, looking at a particularly nice class of regular
rings, namely direct products of matrix rings (in general,
of unbounded sizes) over fields, I was able to find an
infinite family of relations that an inner inverse to an element
of such a ring can be made to satisfy, which includes
the relations $x^n y^n x^n = x^n$ and $y^n x^n y^n = y^n.$

A second surprise came when I mentioned this to Pace Nielsen.
It turned out that in a paper \cite{PN+JvS} that
he and Janez \v{S}ter had submitted for publication, they showed
that for any ring $R,$ any element $x\in R,$ and any positive
integer $n$ such that $x,\ x^2,\dots,x^n$ all have
inner inverses, one can find an inner inverse $y$ to $x$
satisfying many of the same relations that I had found; in particular,
such that $y^j$ is an inner inverse to $x^j$ for all $j\leq n.$
After this discussion, they were able to extend the
relations they obtained so that the union, over all $n,$
of their families of relations coincided with my family.
(Their result appears, so revised, as \cite[Theorem~4.8]{PN+JvS}.
Moreover, the form in which they had originally
formulated their relations led to improvements in the present note.
So \cite{PN+JvS} and this note each contain material
inspired by the other.)

The family of relations in $x$ and $y$ referred to above are
monoid relations, and
a third surprise was to discover, after drafting most of
these pages, that the monoid $\F$ that they define is
well known to semigroup theorists, as the free {\em inverse monoid}
on one generator $x,$ and that there is considerable
literature on inverse monoids and their monoid algebras,
e.g., \cite{MP}, \cite{MVL}, parts of \cite{C+P1} and \cite{C+P2},
\cite[\S 4]{PA+KG}, \cite{C+M}, and \cite{WDM_rings}.

In~\S\ref{S.monoid} below, I develop that monoid
essentially ``as I found it'' (though I have borrowed the
notation $\F$ from~\cite[\S 4]{PA+KG}).
In~\S\ref{S.inv_md}, the concept of inverse monoid
is sketched, and in \S\ref{S.F} we verify the characterization
of $\F$ in terms of that concept.
In subsequent sections, though $\F$ is referred to in those terms,
the results are mostly independent of the literature on the subject.

\section{The monoid $\F$}\label{S.monoid}

Let us motivate the monoid we will be studying by
considering a natural family of pairs of mutually inner inverse
vector-space maps.
We write maps to the left of their arguments,
and compose them accordingly.

Suppose $V$ is an $\!n\!$-dimensional vector space with basis
$\{b_1,\dots,b_n\},$ let $x:V\to V$ be the map that sends $b_h$
to $b_{h+1}$ for $h<n$ and sends $b_n$ to $0,$ and let
$y:V\to V$ be the map that sends $b_h$
to $b_{h-1}$ for $h>1$ and sends $b_1$ to~$0.$

Note that for $p,q\geq 0,$ the map
$y^p$ annihilates the first $p$ of $b_1,\dots,b_n,$
while $x^q$ annihilates the last $q.$
(In this motivating sketch, it will help to think of $p$ and $q$ as
small compared with $n.)$
If we want to annihilate both the first $p$ and the
last $q$ basis elements, we can first apply $y^p$
to get rid of the former,
then $x^p$ to bring those that remain back to their original values,
and then $x^q$ to annihilate the last $q$ of them;
in other words, apply $x^{p+q}y^p.$
In addition to annihilating the first $p$ and the last $q$ of the
$\!b\!$'s, this leaves the $\!b\!$'s that
remain shifted $q$ steps to the right.
If we want to pull them back some distance to the left, we can
then apply $y^r$ for some $r.$
Here we may as well take $r\leq p+q,$
since if we used a larger value, this would kill some of the
elements at the left-hand end of our string, which we could
just as well have achieved by using a larger $p$ at the first step.
Thus, our combined operation has the
form $y^r x^{p+q} y^p$ with $p\geq 0,$ $q\geq 0,$ and $r\leq p+q.$

Clearly, we could have achieved the same result by {\em first}
annihilating the last $q$ of the $\!b\!$'s
using $x^q,$ then the first $p$ using $y^{p+q},$
and finally shifting those that remained using $x^{p+q-r}.$
Thus, $x^{p+q-r} y^{p+q} x^q = y^r x^{p+q} y^p.$
Renaming the exponents $p+q-r,$ $p+q$ and $q$ as
$i,$ $j$ and $k$ gives the first set of relations in the next result,
which records some basic properties of the monoid these relations
define.

\begin{lemma}\label{L.monoid}
The monoid $\F$ presented by two generators $x$ and $y,$ and
the infinite system of relations
\begin{equation}\begin{minipage}[c]{35pc}\label{d.many_rels}
$x^i y^j x^k\ =\ y^{j-i} x^j y^{j-k}$\quad
for all $i,$ $j,$ $k$ such that $0\leq i\leq j$ and $0\leq k\leq j$
\end{minipage}\end{equation}
can also be presented by the subset consisting of the relations
\begin{equation}\begin{minipage}[c]{35pc}\label{d.few_rels1}
$x\,y^j x^j\ =\ y^{j-1} x^j,$\quad $y^j x^j y\ =\ y^j x^{j-1},$\quad
for $1\leq j$
\end{minipage}\end{equation}
\textup{(}the cases of~\eqref{d.many_rels} where the $\!3\!$-tuple
$(i,j,k)$ has one of the forms $(1,j,j)$ or $(0,j,j\,{-}\,1),$
with the two sides of the relation interchanged in the
latter case\textup{)};
and, likewise, by the subset consisting of the relations
\begin{equation}\begin{minipage}[c]{35pc}\label{d.few_rels2}
$x^j y^j x\ =\ x^j y^{j-1},$\quad $y\,x^j y^j\ =\ x^{j-1} y^j,$ \quad
for $1\leq j$
\end{minipage}\end{equation}
\textup{(}the cases where
$(i,j,k)$ has one of the forms $(j,j,1),$ $(j\,{-}\,1,j,0),$
with the same interchange of sides in the latter case\textup{)}.

In $\F,$ every element is equal to the left-hand side
of~\eqref{d.many_rels} for a unique choice of $i,$ $j,$ $k$
satisfying the indicated inequalities; hence, equivalently, to the
right-hand side of~\eqref{d.many_rels} for the same $i,$ $j,$ $k.$

$\F$ embeds in the direct product of monoids
$\lang x,y\mid xy=1\rang\times\lang x,y\mid yx=1\rang$
via the homomorphism $x\mapsto(x,x),$ $y\mapsto(y,y).$
\end{lemma}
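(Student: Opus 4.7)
The plan is to handle the lemma in three stages. First, I would derive every relation in \eqref{d.many_rels} from \eqref{d.few_rels1} (and, by a symmetric argument, from \eqref{d.few_rels2}), establishing that all three systems present the same monoid $\F$. Second, I would show every element of $\F$ can be written as $x^i y^j x^k$ with $0 \le i, k \le j$. Third, I would construct the homomorphism from $\F$ to $B_1 \times B_2$, where $B_1 = \langle x, y \mid xy = 1 \rangle$ and $B_2 = \langle x, y \mid yx = 1 \rangle$, and use its behavior on the proposed normal forms to prove simultaneously the uniqueness of the normal form and the injectivity of the embedding.

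For the first stage, starting from \eqref{d.few_rels1}, I would show by induction on $i$ that $x^i y^j x^j = y^{j-i} x^j$ for $0 \le i \le j$; the inductive step splits off a leading $x$, applies the inductive hypothesis, and then applies the first relation of \eqref{d.few_rels1} with exponent $j-i+1$ after factoring out a trailing $x^{i-1}$. A double induction (on $j$, then on $l$) from the second relation of \eqref{d.few_rels1} yields $y^j x^j y^l = y^j x^{j-l}$ for $0 \le l \le j$, which I rewrite as $y^j x^k = y^j x^j y^{j-k}$. Composing, $x^i y^j x^k = x^i(y^j x^j y^{j-k}) = (x^i y^j x^j) y^{j-k} = y^{j-i} x^j y^{j-k}$, the general case of \eqref{d.many_rels}. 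Since \eqref{d.many_rels} is invariant under reversing each relation, $\F$ carries an anti-automorphism fixing $x$ and $y$ which interchanges \eqref{d.few_rels1} and \eqref{d.few_rels2}, so the derivation from \eqref{d.few_rels2} follows formally from the one just sketched.

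For the second stage, I would let $W \subseteq \F$ consist of the elements expressible as $x^i y^j x^k$ with $0 \le i, k \le j$; since $1 = x^0 y^0 x^0 \in W$, it suffices to show $W$ is closed under left and right multiplication by $x$ and $y$. Interior cases leave the triple admissible, and each of the four boundary cases is re-absorbed by a single application of \eqref{d.many_rels}. To illustrate, for $x \cdot x^j y^j x^k$ I would first apply \eqref{d.many_rels} at $(j, j, k)$ to rewrite $x^j y^j x^k$ as $x^j y^{j-k}$, then multiply by $x$ to get $x^{j+1} y^{j-k}$, and finally apply \eqref{d.many_rels} at $(j+1, j+1, k+1)$ to reintroduce a trailing $x^{k+1}$ and reach the admissible form $x^{j+1} y^{j+1} x^{k+1}$; the other three boundary multiplications are handled by entirely parallel manipulations.

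For the third stage, I would check that in $B_1$ the $xy$-cancellations reduce both sides of \eqref{d.many_rels} to $y^{j-i} x^k$, and in $B_2$ the $yx$-cancellations reduce both to $x^i y^{j-k}$, so $x \mapsto (x, x)$ and $y \mapsto (y, y)$ extend to a well-defined homomorphism sending $x^i y^j x^k$ to the pair $(y^{j-i} x^k,\, x^i y^{j-k})$. The standard unique normal forms $y^a x^b$ and $x^c y^d$ in the bicyclic monoids $B_1$ and $B_2$ then distinguish the images of distinct admissible triples $(i, j, k)$, which yields both the uniqueness assertion and the injectivity of the embedding. The main obstacle, I expect, will be the boundary-case bookkeeping in the second stage: picking the correct instance of \eqref{d.many_rels} to re-enter the admissible region after each multiplication is not entirely mechanical. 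By contrast, the first stage is a direct iteration and the third stage a transparent computation inside two well-understood monoids.
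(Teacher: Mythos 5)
Your proposal is correct. Two of its three stages coincide with the paper's own proof: the derivation of~\eqref{d.many_rels} from~\eqref{d.few_rels1} via the intermediate identities $x^i y^j x^j = y^{j-i}x^j$ and $y^j x^j y^l = y^j x^{j-l}$ followed by the two-way reduction of $x^i y^j x^j y^{j-k}$ is exactly the paper's argument (as is the appeal to word-reversal symmetry for~\eqref{d.few_rels2}), and the simultaneous proof of uniqueness and injectivity by mapping into the product of the two bicyclic monoids and reading $i,$ $j,$ $k$ off the normal forms of the two components is also identical. Where you genuinely diverge is in proving that every element admits an expression $x^i y^j x^k$ with $0\leq i,k\leq j.$ The paper takes a representing word of minimal length, decomposes it into alternating blocks, and uses the relations to show that a word with four or more blocks, or with three blocks whose middle exponent is not dominant, contains a subword $x^jy^jx$ or its mirror and so can be shortened -- a minimal-counterexample argument. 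You instead observe that the set of elements admitting an admissible expression contains $1$ and is closed under multiplication by the generators, switching between the two sides of~\eqref{d.many_rels} so that multiplication by $x$ (resp.\ $y$) only ever increments an outer exponent of the matching letter; your bookkeeping checks out, and in fact the $y$-multiplications are easier than you anticipate (at $i=0$ or $k=0$ the result $y^{j+1}x^k$ or $x^iy^{j+1}$ is already admissible with $j$ incremented, no re-absorption needed), with only the two $x$-multiplications at the boundaries $i=j$ and $k=j$ requiring the maneuver you illustrate. Both routes are sound; yours is more mechanical and avoids any minimality argument, while the paper's gives the additional information that the admissible expressions are realized by words of minimal length, at the cost of a more delicate case analysis on blocks.
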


\begin{proof}
Since the set of relations~\eqref{d.few_rels1}
is a subset of~\eqref{d.many_rels},
to get our first assertion we must show that~\eqref{d.few_rels1}
implies all the relations in~\eqref{d.many_rels}.
Let us first show that it implies the family
\begin{equation}\begin{minipage}[c]{35pc}\label{d.tween_rels1}
$x^i y^j x^j\ =\ y^{j-i} x^j,$\quad $y^j x^j y^i\ =\ y^j x^{j-i},$
\quad for $0\leq i\leq j.$
\end{minipage}\end{equation}
Here the cases with $i=0$ are vacuous, and
those with $i=1$ are~\eqref{d.few_rels1}.
For $i>1,$ we note that the left-hand side of the first
equation of~\eqref{d.tween_rels1} can be written
$x^{i-1}(x\,y^j x^j),$ which by~\eqref{d.few_rels1} reduces to
$x^{i-1} y^{j-1} x^j.$
Writing this as $(x^{i-1} y^{j-1} x^{j-1})\,x,$
we may assume by induction on $i$ that this can be reduced
using~\eqref{d.few_rels1} to
$(y^{(j-1)-(i-1)} x^{j-1})\,x = y^{j-i} x^j,$
the desired expression.
The second equation of~\eqref{d.tween_rels1}
is obtained by the same calculation, with the order
of factors reversed and $x$ and $y$ interchanged.

To get the full set of
relations~\eqref{d.many_rels}, take any $i,$ $j,$ $k$
as in those relations, and note
that the expression $x^i y^j x^j y^{j-k}$ can
be reduced using~\eqref{d.tween_rels1} in two ways:
On the one hand,
$x^i\,(y^j x^j y^{j-k}) =x^i\,(y^j x^{j-(j-k)}) = x^i y^j x^k,$
which is the left-hand side of~\eqref{d.many_rels};
on the other hand, $(x^i y^j x^j)y^{j-k} =(y^{j-i} x^j) y^{j-k},$
which is the right-hand side.
So these are equal, as desired.

By left-right symmetry,~\eqref{d.many_rels} is similarly
equivalent to~\eqref{d.few_rels2}.

Let us show next that every $a\in \F$ can be represented by
an expression as in~\eqref{d.many_rels}.
Let $w$ be an expression for $a$ of minimal length
in $x$ and $y.$
We can write $w$ as an alternating product of nonempty ``blocks'' of
$\!x\!$'s and $\!y\!$'s.
If it consists of $\leq 2$ blocks, it is immediate that
it has one of the forms shown in~\eqref{d.many_rels}, so
assume it has at least $3$ blocks.
I claim that $w$ cannot consist of $\geq 4$ blocks.
For if it does, consider two adjacent blocks which are neither
the rightmost nor the leftmost pair.
By the right-left symmetry of the statements we are dealing with, we
may assume for simplicity that the length of the first of these blocks
is at least that of the second, and by symmetry in $x$ and $y,$
that the first block is a power of $x$
and the second a power of $y;$ so our product of two blocks can
be written $x^i y^j$ with $i\geq j\geq 1.$
Since this pair of blocks does not occur at the right
end of $w,$ it must be followed by an $x;$
so $w$ has a subword $x^i y^j x,$ which in turn has
the subword $x^j y^j x,$ which can be reduced
by~\eqref{d.few_rels2} to the shorter word
$x^j y^{j-1},$ contradicting our minimality assumption.
So $w$ must have just three blocks, and we can assume
without loss of generality that it has the form $x^i y^j x^k.$
If $j$ is $\geq$ both $i$ and $k,$ we are done.
If not, assume without loss of generality that $i>j.$
Then $w$ has the subword $x^j y^j x,$ which,
as above, leads to a contradiction to minimality.
Putting aside our ``w.l.o.g'' assumptions, what we have shown is
that a word $w$ of minimal length representing $a$ will
be of one of the forms shown in~\eqref{d.many_rels}.
Whichever of those forms
it has,~\eqref{d.many_rels} allows us to represent
$a$ in the other form as well (though that may not be
a minimal-length expression for it).

To show that the expression for $a$ in each of these forms is
unique, and simultaneously obtain the embedding of the final
assertion of our lemma, let us first note that the
generators $x$ and $y$ of each of the monoids
$\lang x,y\mid xy=1\rang$ and $\lang x,y\mid yx=1\rang$
clearly satisfy~\eqref{d.few_rels1}.
Hence we get a homomorphism
\begin{equation}\begin{minipage}[c]{35pc}\label{d.<>x<>}
$h:\F\ \to\ \lang x,y\mid xy=1\rang\ \times\ \lang x,y\mid yx=1\rang$
\end{minipage}\end{equation}
carrying $x$ to $(x,x)$ and $y$ to $(y,y).$
Now in $\lang x,y\mid xy=1\rang,$
the expressions $y^i x^j$ give a normal form
(as is easily checked by the method of \cite{<>}),
while in $\lang x,y\mid yx=1\rang,$ the same is true of the
expressions $x^i y^j.$
The image under $h$ of an element $x^i y^j x^k$
with $0\leq i\leq j$ and $0\leq k\leq j,$
written using these normal forms, is $(y^{j-i} x^k, x^i y^{j-k}).$
Here the first component determines $k,$
the second determines $i,$
and with these known, either component determines $j.$
So elements represented by distinct expressions $x^i y^j x^k$ as
in~\eqref{d.many_rels} have distinct images under $h,$ showing,
on the one hand, that~\eqref{d.<>x<>} is an embedding,
and on the other, that distinct expressions as in the left-hand
side of~\eqref{d.many_rels} represent distinct elements of~$\F.$
\end{proof}

Intuitively, the first relation of~\eqref{d.few_rels1}
says that after a factor $x^j y^{j-1},$ one can ``drop''
a factor $y x;$ in other words, that $x^j y^{j-1}$ behaves like $x$
in that respect.
The other relation of~\eqref{d.few_rels1}
and those of~\eqref{d.few_rels2} have the obvious analogous
interpretations.
More generally, I claim that
\begin{equation}\begin{minipage}[c]{35pc}\label{d.w_like_x}
If $w$ is a word in $x$ and $y$ which has strictly more $\!x\!$'s
than $\!y\!$'s, then in the monoid $\F$ we have $wyx=w=xyw.$
\end{minipage}\end{equation}
\begin{equation}\begin{minipage}[c]{35pc}\label{d.w_like_y}
If $w$ is a word in $x$ and $y$ which has strictly more $\!y\!$'s
than $\!x\!$'s, then in the monoid $\F$ we have $wxy=w=yxw.$
\end{minipage}\end{equation}

Indeed, by symmetry in $x$ and $y,$ it suffices to
prove~\eqref{d.w_like_x}, and by right-left symmetry
it suffices to prove the first relation thereof, $wyx=w.$
The last sentence of Lemma~\ref{L.monoid} shows that it suffices
to prove that relation in each of the monoids
$\lang x,y\mid xy=1\rang$ and $\lang x,y\mid yx=1\rang.$
The result is trivially true in the latter monoid.
In the former, $w$ can be reduced to a word $y^i x^j$ such
that $j-i=($number of $\!x\!$'s in $w)-($number of $\!y\!$'s in $w).$
This difference is positive by the hypothesis of~\eqref{d.w_like_x},
so $j>0;$ so $w$ is equal in $\lang x,y\mid xy=1\rang$
to a word ending in $x,$ so by the relation $xyx=x,$
we indeed have $wyx=w.$

\section{A quick introduction to inverse monoids}\label{S.inv_md}

The concept of {\em inverse monoid}, mentioned in the
Introduction, has as its motivating case
the set of {\em partial one-to-one maps}
of a set $S$ into itself; i.e., the one-to-one maps from
subsets of $S$ to~$S.$

Clearly, the composite of two partial one-to-one maps is again such
a map, as is the identity map of $S,$ so such maps form a monoid.
Further, if $x$ is such a partial map, and we view it as
a relation, hence a subset of $S\times S,$ then the inverse relation
$y=\{(t,s)\mid (s,t)\in x\}$ is again a partial one-to-one map.
If we consider all identities satisfied by such partial
maps, under the monoid operations and this ``inverse'' operation,
these determine a {\em variety} of monoids with an additional
unary operation.
We shall call objects of this variety {\em inverse monoids}.

Among semigroup-theorists, the ``inverse'' operation
is generally written $y=x^{-1}.$
But this would conflict badly with the usage of ring theory,
so, following~\cite[\S 4]{PA+KG}, I will instead write~$x^*.$

It turns out that (as with the inverse operation of a group) if
a monoid admits any unary operation $^*$ satisfying the identities
of the above variety, then that operation is unique
\cite[Theorem~1.17]{C+P1}, \cite[Theorem~1.3]{MVL}; so (like groups)
inverse monoids can be identified with a subclass of the monoids,
and they are in fact generally so described in the literature.
Precisely, it is easy to see that the identities of
inverse monoids include $x = x x^* x$ and $x^* = x^* x x^*,$
and the standard definition is that an inverse monoid
is a monoid $M$ such that
\begin{equation}\begin{minipage}[c]{35pc}\label{d.uniq}
For every $x\in M$ there is a {\em unique} $y\in M$
satisfying $xyx=x$ and $yxy=y$\\
\cite[p.\,6, conditions~(1) and~(2)]{MVL},
\cite[Definition~II.1.1]{MP},
\cite[p.\,28, lines~9-11]{C+P1}.
\end{minipage}\end{equation}

(Caveat: Given an element $x$ in an inverse monoid, an
element $y$ satisfying $xyx=x,$ but not necessarily
$yxy=y,$  will not, in general, be unique.
For instance, in the monoid of partial one-to-one endomaps of
a set $S,$ if $x$ is the empty partial map, every $y$
satisfies that relation.)

Actually, most of the literature in this
area describes its subject as
{\em inverse semigroups,} where no identity element is assumed.
Those that are monoids are indeed considered, but as a subcase.
However, as with the relationship between
ordinary semigroups and monoids, or nonunital and unital
rings, either version of the theory can be reduced to the
other, so which is treated as primary is largely a matter of
taste; and I will talk about {\em inverse monoids} here.

The most commonly used characterizations of inverse monoids
are not given by identities;~\eqref{d.uniq} is such
a characterization.
We recall two others:
if $M$ is a monoid and $^*$ a unary operation on
its underlying set, then each of the following conditions is
equivalent to $(M,^*)$ being an inverse monoid.
\begin{equation}\begin{minipage}[c]{35pc}\label{d.embed}
$(M,{}^*)$ can be embedded in the monoid of one-to-one endomaps
of some set $S,$ in such a way that ${}^*$ acts
as the relational inverse map
\cite[Corollary~IV.1.9]{MP},
\cite[p.\,36, Theorem~1.5.1]{MVL},
\cite[Theorem~1.2]{C+P1}.
\end{minipage}\end{equation}
\begin{equation}\begin{minipage}[c]{35pc}\label{d.idpts}
For every $x\in M,$ one has $x=x x^* x,$ and, further, every pair
of idempotent elements of $M$ commute
\cite[p.\,6, Theorem~3]{MVL},
\cite[Theorem~1.17(i)]{C+P1},
cf.\ \cite[Theorem~II.1.2]{MP}.
\end{minipage}\end{equation}

To move toward a characterization by identities,
note that the identities satisfied by partial
one-to-one operations on a set include
the conditions making $^*$ an {\em involution} of $M,$
\begin{equation}\begin{minipage}[c]{35pc}\label{d.invol}
$(ab)^*\ =\ b^* a^*$\quad and\quad $a^{**}\ =\ a.$
\end{minipage}\end{equation}
From these it follows that if an inverse monoid $(M,^*)$
is generated by a subset $X,$ then as
a monoid, $M$ is generated by $X\cup X^*.$
Indeed, the submonoid generated by that set will
contain $X$ and be closed under the monoid operations, and
by~\eqref{d.invol} it will be closed under~$^*,$
hence it will be all of~$M.$

Now the reason why~\eqref{d.idpts} is not a set of {\em identities}
is that the condition that idempotents
commute is not an equation in arbitrary elements of $M.$
However, given the identity
$x=x x^* x$ of~\eqref{d.idpts}, we see that if $e$ is an idempotent,
then $e=e e^* e = e (e^* e^*) e = (e e^*)(e^* e^{**}),$
a product of two idempotents of the form $x x^*.$
Hence if we combine the monoid identities with the identity
$x=x x^* x,$ and the identity saying that any two elements of
the forms $x x^*$ and $y y^*$ commute, then these together
imply~\eqref{d.idpts}, and so define the variety of inverse monoids.

A feature that inverse monoids share with groups, though
not nearly as easy to prove, is
\begin{equation}\begin{minipage}[c]{35pc}\label{d.hom}
If $(M,^*)$ is an inverse monoid, and $N$ is a homomorphic
image of $M$ as a monoid, then the operation $^*$ on $M$
induces an operation on $N,$ whence $N$
becomes an inverse monoid \cite[Theorem~7.36]{C+P2},
\cite[Lemma II.1.10]{MP}.
\end{minipage}\end{equation}

In view of the uniqueness of the inverse monoid structure on
a monoid when one exists, I will sometimes take the shortcut
of saying that a monoid ``is'' an inverse monoid (as in the
standard usage in the field) when I mean that it admits
an operation $^*$ making it an inverse monoid.

Alongside groups, and monoids of partial one-to-one maps
of sets, another important class of
inverse semigroups are the semilattices, where $^*$ is taken to be the
identity map.

Extensive developments of the theory of inverse semigroups can
be found in~\cite{MVL} and~\cite{MP}.

\section{$\F$ as a free inverse monoid}\label{S.F}

In~\S\ref{S.monoid}, we motivated the structure
of our monoid $\F$ in terms of certain endomorphisms
of finite-dimensional vector spaces.
What we did can be looked at as a partial-one-one-map
construction in a different guise.
Each of $x$ and $y,$ and hence also
every monoid word in those elements, sends certain
members of our basis $\{b_1,\dots,b_n\}$ to members of that basis,
and does so in a one-to-one
fashion, while it maps the remaining elements of that basis to $0.$
If we associate to every endomorphism $z$ of $V$ that acts in
such a way on our basis the partial endomap of that basis which
agrees with $z$ on elements that $z$ sends to basis
elements, and is undefined on those $z$ sends $0,$ then the
resulting partial maps compose like the given linear endomorphisms.

On each of our finite-dimensional vector spaces,
the monoid of endomorphisms generated by $x$ and $y$ is finite;
but if we think of $x$ and $y$
as acting simultaneously on $\!n\!$-dimensional spaces for
all natural numbers $n,$ we shall find that we
get the infinitely many distinct elements of $\F.$
We will deal with actions on vector spaces in the
next section; here, let us show monoid-theoretically that

\begin{lemma}\label{L.free}
The monoid $\F$ of Lemma~\ref{L.monoid} is the free
inverse monoid on one generator $x,$ with $y=x^*.$
\end{lemma}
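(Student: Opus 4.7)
The plan has three ingredients: (i) exhibit an inverse monoid structure on $\F$, (ii) verify that $x^*=y$ under it, and (iii) check the universal property.

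For (i) and (ii), I would use the embedding $h\colon\F\hookrightarrow B\times B'$ from Lemma~\ref{L.monoid}, where $B=\lang x,y\mid xy=1\rang$ and $B'=\lang x,y\mid yx=1\rang$. Each of $B$ and $B'$ is well known to be an inverse monoid (the bicyclic monoid and its opposite); the involution on $B$ sends the normal-form element $y^i x^j$ to $y^j x^i$, so in particular $x^*=y$ and $y^*=x$, and analogously for $B'$. Since inverse monoids form a variety, $B\times B'$ is an inverse monoid in which $(x,x)^*=(y,y)$. The image of $\F$ under $h$ is a submonoid containing both $(x,x)$ and its involute $(y,y)$, hence is closed under $^*$, hence is a sub-inverse-monoid of $B\times B'$. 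Pulling back along the embedding $h$, we conclude that $\F$ is an inverse monoid with $x^*=y$.

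For (iii), let $M$ be an inverse monoid and $m\in M$. Uniqueness of an inverse monoid homomorphism $\F\to M$ sending $x\mapsto m$ is automatic: such a map must send $y=x^*$ to $m^*$, and $\F$ is generated as a monoid by $\{x,y\}$. For existence, the monoid presentation of $\F$ in Lemma~\ref{L.monoid} reduces the problem to verifying that, under $x\leftarrow m$, $y\leftarrow m^*$, the relations~\eqref{d.few_rels1} hold in $M$; that is, for every $j\ge 1$,
\[
m(m^*)^j m^j\ =\ (m^*)^{j-1}m^j\quad\text{and}\quad (m^*)^j m^j m^*\ =\ (m^*)^j m^{j-1}.
\]
The second follows from the first by applying the involution of $M$. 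For the first, induct on $j$: setting $f_k=(m^k)^*m^k=(m^*)^k m^k$ (an idempotent of the form $a^*a$ for every $k$), the identity becomes $m f_j=f_{j-1}m$, which at $j=1$ is just $mm^*m=m$. For the step, write $f_{j+1}=m^*f_j m$, whence
\[
m f_{j+1}\ =\ (mm^*)f_j m\ =\ f_j(mm^*)m\ =\ f_j m,
\]
using commutativity of the idempotents $mm^*$ and $f_j$ at the middle step and $mm^*m=m$ at the last.

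The only real obstacle is this inductive computation, and it rests only on the defining properties of inverse monoids (namely $a=aa^*a$, $(ab)^*=b^*a^*$, and commutativity of idempotents). Once~\eqref{d.few_rels1} is verified in $M$, Lemma~\ref{L.monoid} yields a monoid homomorphism $\F\to M$; as a homomorphism between inverse monoids it automatically respects $^*$ (since $a^*$ is the unique element satisfying $aa^*a=a$ and $a^*aa^*=a^*$), completing the proof.
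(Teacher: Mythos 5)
Your proposal is correct and follows essentially the same route as the paper: it establishes the inverse monoid structure via the embedding into the product of the two bicyclic monoids, and verifies freeness by checking the relations~\eqref{d.few_rels1} for $m,m^*$ in an arbitrary inverse monoid using commutativity of the idempotents $mm^*$ and $(m^*)^{j}m^{j}$ (your ``induction'' step is in fact the paper's direct computation, since it never uses the inductive hypothesis). The only difference is that you spell out the uniqueness of the homomorphism and the automatic preservation of $^*$, which the paper leaves implicit.
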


\begin{proof}
To see that $\F$ is an inverse monoid, we shall use its
representation, established in Lemma~\ref{L.monoid},
as a subdirect product of
\begin{equation}\begin{minipage}[c]{35pc}\label{d.bicyc}
$\lang x,y\mid xy=1\rang$\quad and\quad $\lang x,y\mid yx=1\rang.$
\end{minipage}\end{equation}
These are two copies of the same monoid, called the
{\em bicyclic} monoid, which has a natural representation
by partial endomaps of the natural numbers,
with the left-invertible generator acting as the right shift
and the right-invertible generator
as the left shift (undefined at $0).$
The involution $^*$ of each of the inverse monoids
of~\eqref{d.bicyc} interchanges
the generators $x$ and $y$ of that monoid,
so the submonoid $\F$ generated by $(x,x)$ and $(y,y)$ is
closed under coordinatewise application
of $^*,$ hence is itself an inverse monoid.

To show that the inverse monoid $(\F,{}^*)$ is free on $\{x\},$
it suffices to show that for any element $x$
of an inverse monoid, if we write $x^*=y,$ then
$x$ and $y$ satisfy the relations of~\eqref{d.few_rels1}.
To get the first of these relations, let us write
$x y^j x^j$ as $(xy) (y^{j-1} x^{j-1}) x.$
Since $y=x^*,$ we see from~\eqref{d.invol}
that $(y^{j-1})^*=x^{j-1},$
hence both $xy$ and $y^{j-1} x^{j-1}$ are idempotent, hence
by~\eqref{d.idpts} they commute, so we can write
$(xy) (y^{j-1} x^{j-1}) x$ as $(y^{j-1} x^{j-1}) (xy) x =
y^{j-1} x^{j-1} (xyx) =
y^{j-1} x^{j-1} x = y^{j-1} x^j,$ giving the desired relation.
Applying the involution $^*,$ we get the other
relation of~\eqref{d.few_rels1}.
\end{proof}

The normal form for elements of the free
inverse monoid on one generator given by the expressions
on either side of~\eqref{d.many_rels}
is also obtained in \cite[proof of Lemma~4.1]{PA+KG}.
I do not know
whether a system of defining monoid relations as economical
as~\eqref{d.few_rels1} or~\eqref{d.few_rels2} has previously
been noted.

The multiplicative monoids of the rings we will be looking at in
this note
are not, in general, inverse monoids; but we will nonetheless
be interested in pairs of elements $x$ and $y$ of these rings
that satisfy the relations holding between elements $x$ and $x^*$
of an inverse monoid.
Let us therefore make

\begin{definition}\label{D.strong_inner}
If $x$ is an element of a monoid $M,$ we shall call an element
$y\in M$ a {\em strong inner inverse} to $x$ if the submonoid of $M$
generated by $x$ and $y$ can be made an inverse monoid with $y=x^*;$
equivalently, if there exists a monoid homomorphism $\F\to M$
carrying $x,y\in\F$ to the elements of $M$ denoted by these
same symbols; equivalently, if $x,y\in M$ satisfy any
of the equivalent systems of monoid relations~\eqref{d.many_rels},
\eqref{d.few_rels1},~\eqref{d.few_rels2}.
\end{definition}

Clearly, the relation ``is a strong inner inverse of'' is symmetric.

From our observations on the bicyclic monoid
$\lang x,y\mid xy=1\rang\cong\lang x,y\mid yx=1\rang,$
we see that any right or left inverse of an element $x$
of a monoid (and hence in particular, any $\!2\!$-sided
inverse) is a strong inner inverse to $x.$

If every element of a monoid $M$ has a
{\em unique} strong inner inverse, then $M$ will, {\em a fortiori},
satisfy~\eqref{d.uniq}, and so be an inverse monoid.
But we shall see below that in many rings,
every element has a strong inner inverse, without these being unique.

(We remark that our use of ``strong'' in ``strong inner inverse''
should not be confused with the use of ``strongly'' in the existing
concept of a ``strongly regular ring'', a regular ring in which
all idempotents are central.
That condition is much more restrictive
than the condition that every element have a strong inner inverse
in our sense.)

\section{Many vector space endomorphisms have strong inner inverses \dots}\label{S.strong_inner}

We shall show here that if $V$ is a vector space over
a division ring $D,$ then a large class of
elements $x\in\r{End}_D(V)$ have strong inner inverses.
Our proof will make use of the following weak version of Jordan
canonical form, which holds over division rings.

\begin{lemma}\label{L.Jordan}
Suppose $V$ is a finite-dimensional vector space over a
division ring $D,$ and $x$ a vector-space endomorphism of $V.$
Let us call a nonzero $\!x\!$-invariant subspace $W$ of $V$
``$\!x\!$-basic''
if it admits a basis $b_1,\dots,b_n$ such that $x(b_h)=b_{h+1}$
for $h<n,$ and $x(b_n)=0.$
Then $V$ can be written as the direct sum of a subspace on
which $x$ acts invertibly, and a family of $\!x\!$-basic subspaces.
\end{lemma}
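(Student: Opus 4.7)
The plan is to reduce to the nilpotent case via a Fitting-type decomposition, and then prove the nilpotent case by the standard filtration argument, checking that nothing about the proof requires commutativity of scalars.

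First, since $V$ is finite-dimensional, the descending chain $V\supseteq x(V)\supseteq x^2(V)\supseteq\cdots$ and the ascending chain $\ker x\subseteq\ker x^2\subseteq\cdots$ must both stabilize; choose $n$ large enough that both stabilize by index $n$. A short argument then gives $V=\ker x^n\oplus x^n(V)$, with both summands $x$-invariant. On $x^n(V)$ the map $x$ is surjective (by stability of the image chain), hence bijective by finite-dimensionality, giving the invertible summand. So it suffices to handle the case where $x$ is nilpotent on $V$, say with $x^n=0$.

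In that case consider the filtration $0=W_0\subsetneq W_1\subsetneq\cdots\subsetneq W_n=V$ with $W_k=\ker x^k$ (strict up to index $n$ by minimality). I build, top-down, subspaces $U_k\subseteq W_k$ such that $W_k=W_{k-1}\oplus U_k$ and $x(U_k)\subseteq U_{k-1}$ for $k\geq 2$. Start with any complement $U_n$ of $W_{n-1}$ in $V$. Having chosen $U_{k+1}$, observe first that $x\colon U_{k+1}\to W_k$ is injective (if $u\in U_{k+1}$ and $x(u)=0$, then $u\in W_1\cap U_{k+1}\subseteq W_k\cap U_{k+1}=0$), and second that $x(U_{k+1})\cap W_{k-1}=0$ (if $x(u)\in W_{k-1}$ then $x^k(u)=0$, so $u\in W_k\cap U_{k+1}=0$). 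Hence $W_{k-1}\oplus x(U_{k+1})$ sits inside $W_k$, and I may extend it to a direct-sum decomposition $W_k=W_{k-1}\oplus U_k$ with $x(U_{k+1})\subseteq U_k$.

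Now for each $k$ choose a subspace $C_k\subseteq U_k$ complementing $x(U_{k+1})$ in $U_k$ (with $C_n=U_n$), and pick a basis $B_k$ of $C_k$. For each $b\in B_k$, the vectors $b,\,x(b),\,x^2(b),\dots,x^{k-1}(b)$ are nonzero (since $b\in U_k\subseteq W_k\setminus W_{k-1}$), lie successively in $U_k,U_{k-1},\ldots,U_1$, and span an $x$-basic subspace in the sense of the lemma (after reindexing $b_1=b,\ldots,b_k=x^{k-1}(b)$, so that $x(b_h)=b_{h+1}$ and $x(b_k)=x^k(b)=0$). A dimension count, or equivalently a short descending induction using $U_k=C_k\oplus x(U_{k+1})$, shows that the union of these chains is a basis of $V$, so the resulting $x$-basic subspaces are in direct sum and fill $V$.

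The only step that requires genuine care is the top-down construction of the $U_k$'s with $x(U_{k+1})\subseteq U_k$; everything else is linear algebra that works verbatim over a division ring since one-sided vector spaces over $D$ are free, have well-defined dimension, and admit complements of subspaces.
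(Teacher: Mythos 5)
Your proof is correct, and its overall skeleton (Fitting-type decomposition into an invertible part and a nilpotent part, then a chain-basis construction for the nilpotent part) matches the paper's. The nilpotent case, however, is handled by a dual construction. The paper filters $\r{ker}(x)$ by the subspaces $\r{ker}(x)\cap\r{im}(x^i),$ chooses a basis of $\r{ker}(x)$ adapted to that filtration, and then builds each $\!x\!$-basic chain \emph{backwards}, by picking for each basis element $b\in\r{ker}(x)\cap\r{im}(x^i)$ a preimage $b'$ under $x^i$ and taking the chain $b',x(b'),\dots,x^i(b')=b.$ You instead filter $V$ by the kernels $W_k=\r{ker}(x^k),$ construct compatible complements $U_k$ top-down with $x(U_{k+1})\subseteq U_k,$ and build each chain \emph{forwards} from a basis vector of $C_k.$ The paper's route gets to the candidate basis faster but leaves the verification that~\eqref{d.basis} is linearly independent and spanning as an ``easily shown'' step; your route front-loads the work into the construction of the $U_k$ (the one step you rightly flag as needing care), after which the direct-sum decomposition $V=\bigoplus_k U_k$ with $U_k=\bigoplus_{j\geq k}x^{j-k}(C_j)$ makes the basis claim essentially automatic. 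Both arguments use only existence of bases, dimensions, and complements for one-sided vector spaces over a division ring, so both are valid in the stated generality; the paper also saves a little space by citing Fitting's Lemma where you reprove it from chain stabilization.
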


\begin{proof}
By Fitting's Lemma \cite[Theorem~19.16]{TYL_1st},
there exists an $N\geq 1$ such that
\begin{equation}\begin{minipage}[c]{35pc}\label{d.V=I+K}
$V\ =\ \r{im}(x^N)\ \oplus\ \r{ker}(x^N).$
\end{minipage}\end{equation}
Clearly, each of these summands is $\!x\!$-invariant, and
$x$ acts injectively (hence invertibly) on the former,
since otherwise the summands would have nonzero intersection.
So it will suffice to show that $\ker(x^N)$ is a direct sum
of $\!x\!$-basic subspaces.
This can be done in a well-known manner which we now sketch.

Noting that
$\r{ker}(x)\cap\r{im}(x^N)\subseteq \ker(x^N)\cap \r{im}(x^N)=\{0\},$
we look at the chain of subspaces
\begin{equation}\begin{minipage}[c]{35pc}\label{d.ker_cap_im}
$\r{ker}(x)\cap\r{im}(x^{N-1})\ \subseteq
\ \r{ker}(x)\cap\r{im}(x^{N-2})\ \subseteq\ \dots\ \subseteq
\ \r{ker}(x)\cap\r{im}(x)\ \subseteq\ \r{ker}(x).$
\end{minipage}\end{equation}
We take a basis $B_{N-1}$ of the first of these,
extend it to a basis $B_{N-1}\cup B_{N-2}$ of the second,
and so on, getting a basis $B_{N-1}\cup\dots\cup B_{1}\cup B_{0}$
of $\r{ker}(x),$ with the $B_i$ disjoint.
For $0\leq i\leq N-1,$ since $B_i\subseteq\r{im}(x^i)$
we can write each $b\in B_i$ as $x^i(b')$
for some $b',$ which we see will still lie in $\r{ker}(x^N).$
If we consider the set
\begin{equation}\begin{minipage}[c]{35pc}\label{d.basis}
$\bigcup_{0\leq i\leq N-1,\ b\in B_i}\ \{x^h(b')\mid\ 0\leq h\leq i\},$
\end{minipage}\end{equation}
then this is easily shown to form a $\!D\!$-basis for $\r{ker}(x^N),$
and for each $i$ and each $b\in B_i,$
the span of the set $\{x^h(b')\mid\ 0\leq h\leq i\}$
is $\!x\!$-basic, since
$x$ carries $x^h(b')$ to $x^{h+1}(b')$ for $h\leq i,$
and $x^i(b')=b\in B_i\subseteq\r{ker}(x)$ to~$0.$
\end{proof}

We can now prove

\begin{theorem}\label{T.main}
Let $D$ be a division ring, $V$ a right vector space over $D,$
and $x$ a vector space endomorphism of $V$ such that $V$ can be written
as a direct sum of $\!x\!$-invariant subspaces, each of
which is either finite-dimensional, or has the property that
the action of $x$ on it is one-to-one, or has the property that
the action of $x$ on it is surjective.
Then $x$ has a strong inner inverse $y$ in $\r{End}_D(V),$
which carries each of those direct summands into itself.

If, moreover, our decomposition of $V$ includes,
for each positive integer $n,$ at least one
$\!n\!$-dimensional summand which is $\!x\!$-basic
in the sense of Lemma~\ref{L.Jordan},
then a strong inner inverse $y$ to $x$ can be chosen so that the
ring of additive-group endomorphisms of $V$ generated by the action
of $D$ and the actions of $x$ and $y$ has precisely the structure of
the monoid ring $D\,\F.$
In particular, if $D$ is a field $k,$ this says that the
$\!k\!$-algebra of endomorphisms of $V$ generated by
$x$ and $y$ is isomorphic to the monoid algebra $k\,\F.$
\end{theorem}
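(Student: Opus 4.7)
The plan has two parts, matching the two assertions of the theorem.  In the first, $y$ is built summand by summand on the given direct sum decomposition; this is legitimate because the defining monoid relations~\eqref{d.few_rels1} of $\F$ are equations in $x$ and $y,$ and an equation in endomorphisms holds on a direct sum iff it holds on each summand.  The second part requires further pinning down $y$ on the $\!x\!$-basic summands so that the natural surjection from the monoid ring $D\,\F$ to the subring $R'\subseteq\r{End}_\Z(V)$ generated by the action of $D$ and of $x,y$ is injective.

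For Part~1, I would handle the three types of summand separately.  If $x$ acts invertibly on $W,$ take $y|_W=(x|_W)^{-1},$ so all monoid relations hold trivially.  If $x|_W$ is injective, pick a $\!D\!$-module complement $U$ of $x(W)$ in $W$ and let $y|_W$ be the inverse of $x|_W$ on $x(W)$ and zero on $U;$ then $y|_W\,x|_W=\r{id}_W,$ and as noted after Definition~\ref{D.strong_inner}, any one-sided inverse is a strong inner inverse.  The surjective case is dual, using a $\!D\!$-linear section of $x|_W.$  Finally, a finite-dimensional summand decomposes, by Lemma~\ref{L.Jordan}, into an invertible part together with $\!x\!$-basic summands; on an $\!x\!$-basic summand with basis $b_1,\dots,b_n,$ set $y(b_h)=b_{h-1}$ for $h>1$ and $y(b_1)=0.$  This is precisely the motivating pair of Section~\ref{S.monoid}, from which~\eqref{d.many_rels} was directly derived.

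For Part~2, assume additionally that the decomposition contains an $\!n\!$-dimensional $\!x\!$-basic summand $V_n$ for every $n\geq 1,$ with $y$ acting on $V_n$ as in the preceding paragraph.  Consider a putative kernel element $\sum_{(i,j,k)\in S}d_{i,j,k}\,x^iy^jx^k\in D\,\F,$ where $S$ is a finite set of normal-form triples $(0\leq i\leq j,\,0\leq k\leq j).$  A short calculation gives $(x^iy^jx^k)(b_h^{(n)})=b^{(n)}_{h+i+k-j}$ when $j-k+1\leq h\leq n-k,$ and $0$ otherwise.  For each pair $(i_{\max},k_{\max})$ of nonnegative integers and each integer $s,$ define
\[
E(s,i_{\max},k_{\max})\ :=\ \sum_{\substack{(i,j,k)\in S\\ i+k-j=s,\ i\leq i_{\max},\ k\leq k_{\max}}}d_{i,j,k}.
\]
When $s\leq i_{\max}$ this is the coefficient of $b^{(n)}_{i_{\max}+1}$ in the action of the kernel element on $b^{(n)}_{i_{\max}+1-s}\in V_n$ (with $n=i_{\max}+1-s+k_{\max}),$ hence vanishes; when $s>i_{\max},$ the normal-form inequality $s=i+k-j\leq i\leq i_{\max}$ is violated, so the sum is empty.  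Either way $E(s,i_{\max},k_{\max})=0.$  The two-dimensional inclusion-exclusion
\[
E(s,i_{\max},k_{\max})-E(s,i_{\max}{-}1,k_{\max})-E(s,i_{\max},k_{\max}{-}1)+E(s,i_{\max}{-}1,k_{\max}{-}1)
\]
reduces to the single term $d_{i_{\max},\,i_{\max}+k_{\max}-s,\,k_{\max}},$ which must therefore be zero.  Taking $(i_{\max},k_{\max},s)=(i,k,i+k-j)$ for each $(i,j,k)\in S$ kills every coefficient.

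The main obstacle is Part~2.  Looking at any single $V_n$ cannot suffice, since distinct normal-form elements of $\F$ can act identically there: partial shifts on overlapping subintervals are linearly dependent in $\r{End}_D(V_n).$  The force of the hypothesis providing $\!x\!$-basic summands of every dimension is to make the two cutoff parameters $i_{\max}$ (from varying the evaluation point $h$) and $k_{\max}$ (from varying $n$) vary independently, which is exactly what the two-dimensional inclusion-exclusion above requires in order to separate every coefficient.
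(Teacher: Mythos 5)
Your proof is correct. The first half coincides with the paper's argument: reduce by Lemma~\ref{L.Jordan} to summands on which $x$ is injective, surjective, or $\!x\!$-basic; use a one-sided inverse in the first two cases (already noted after Definition~\ref{D.strong_inner} to be a strong inner inverse) and the left shift in the third; and check the relations~\eqref{d.few_rels1} summand by summand. In the faithfulness argument of the second half you take a genuinely different route. The paper singles out the term $x^i y^j x^k$ of the kernel element whose triple $(k,j,i)$ is lexicographically least, applies the element to the single basis vector $b_{j-k+1}$ of a summand of dimension exactly $j+1,$ and checks that no other term can contribute to the coefficient of $b_{i+1};$ one well-chosen evaluation thus exhibits a nonzero output. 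You instead record the entire system of vanishing conditions, packaged as the sums $E(s,i_{\max},k_{\max}),$ and invert it by a second-order finite difference in the two cutoff parameters. Both are valid; the paper's leading-term argument is shorter and needs only one evaluation per element, while yours is more mechanical and makes visible exactly why summands of every dimension are required (the cutoffs $i_{\max}$ and $k_{\max}$ must vary independently, as your closing remark says). Two small points of hygiene: when $s\le i_{\max}$ but $s>k_{\max},$ the vector $b^{(n)}_{i_{\max}+1}$ does not lie in $V_n,$ so the coefficient interpretation of $E$ momentarily lapses --- but since $i\le j$ forces $s=i+k-j\le k\le k_{\max}$ for every term of the sum, $E$ is then the empty sum and still vanishes; and the boundary values $E(s,-1,\cdot)$ and $E(s,\cdot,-1)$ appearing in the inclusion--exclusion when $i_{\max}=0$ or $k_{\max}=0$ should be declared to be $0$ (again as empty sums).
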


\begin{proof}
In proving the assertion of the first paragraph
we may, by the preceding lemma,
assume that each of the given $\!x\!$-invariant summands of $V$ either
has $x$ acting surjectively on it, or
has $x$ acting injectively on it, or is
$\!x\!$-basic; so we are reduced to proving that $x$ has
a strong inner inverse if $x$ acts in one of these
three ways on $V$ itself.

If $x$ is surjective or injective, then it is right
or left invertible in $\r{End}(V),$ and a right or
left inverse will be the desired strong inner inverse.

So suppose $V$ is $\!x\!$-basic, and let $\{b_1,\dots,b_n\}$ be
a basis of $V$ such that $x$ carries
each $b_h$ with $h<n$ to $b_{h+1},$ and carries $b_n$ to $0.$
As in the motivating sketch of~\S\ref{S.monoid},
define $y$ to carry $b_h$ to $b_{h-1}$ for $h>1,$ and $b_1$ to $0.$
We could verify that $y$ is a strong inner inverse to $x$
using the correspondence between one-to-one partial maps on our basis
of $V$ and certain endomorphisms of $V,$ as noted
at the beginning of~\S\ref{S.F}, but the hands-on proof that $x$
and $y$ satisfy the relations of~\eqref{d.few_rels1} is
quick enough, so I will sketch it.

To prove, first, the relation $x\,y^j x^j = y^{j-1} x^j$ for $1\leq j,$
let us compare the effect of these two monomials on some $b_h.$
If $j+h>n,$ then the $x^j$ at the right end of each monomial annihilates
$b_h,$ so the two sides of the relation indeed agree on $b_h.$
If $j+h\leq n$ (so in particular, $h<n),$ it is immediate to check
that each side gives $b_{h+1},$ and the relation again holds.

Similarly, on checking the results of applying the two sides of the
relation $y^j x^j y = y^j x^{j-1}$ for $1\leq j$ to $b_h,$ we
find that if $h=1$ or $h+(j-1)>n,$ both sides give $0,$ while in the
contrary case, both sides give $b_{h-1}.$
This completes the proof of the assertion of the first
paragraph of the theorem.

To get the assertion of the second paragraph, let $y$ be constructed
on each $\!x\!$-basic summand of our decomposition of $V$ as in the
proof of the first assertion.
Since $y$ is a strong inner inverse to $x,$ the
elements $x$ and $y$ satisfy the relations defining $\F,$
and since they are $\!D\!$-linear, they commute with the
action of the elements of $D;$ hence the actions of $x,$
$y$ and the elements of $D$ yield a $\!D\F\!$-module structure on $V.$
It remains to show that this module is faithful,
so let $a\in D\,\F-\{0\},$ and let us show that the
action of $a$ on $V$ is nonzero.

To this end, let $x^i y^j x^k,$ with $i,\,j,\,k$ as
in~\eqref{d.many_rels}, be the element of $\F$
having nonzero $\!D\!$-coefficient
in $a$ which gives the {\em least} $\!3\!$-tuple $(k,j,i)$
under lexicographic order.
(Note the reversed order of indices.)
Choose an $\!x\!$-basic subspace $V'$ in our given decomposition of
$V$ which has dimension exactly $j+1,$ and let $\{b_1,\dots,b_{j+1}\}$
be a basis for $V'$ on which $x$ and $y$ act as right and left
shift operators.

I claim that the element $b_{j-k+1}$ is not annihilated by $a.$
Consider first how $x^i y^j x^k$ acts on $b_{j-k+1}.$
The factor $x^k$ carries it to $b_{j+1}$ (note that any
higher power of $x$ would kill it), $y^j$ carries this to
$b_1$ (here any higher power of $y$ would kill it),
and $x^i$ brings this to $b_{i+1}.$
So it will suffice to show that none of the other terms occurring
in $a$ carry $b_{j-k+1}$ to an expression in which $b_{i+1}$ appears.
Let $x^{i'} y^{j'} x^{k'}$ be any other term occurring in $a.$

By our minimality assumption on $(k,j,i),$ we must
have $k'\geq k.$
If this inequality is strict, then $b_{j-k+1}$ is
killed on applying $x^{k'}$ to it.
On the other hand, if $k'=k,$ then $j'\geq j,$
and if this inequality is strict, our basis
element is killed on applying $y^{j'} x^k.$
Finally, if $k'=k$ and $j'=j,$ we must have $i'\neq i,$
and we see that $x^{i'} y^j x^k$ will carry
$b_{j-k+1}$ to $b_{i'+1}\neq b_{i+1}.$
Hence in $a(b_{j-k+1}),$
only the term $x^i y^j x^k(b_{j-k+1})$ contributes to the coefficient
of $b_{i+1},$ so $a(b_{j-k+1})\neq 0,$ so
$a$ indeed acts nontrivially on $V.$
\end{proof}

We shall see in the next section that in the first assertion of
the above theorem, the restriction to endomorphisms $x$ such
that $V$ is an appropriate direct sum of
sorts of $\!x\!$-invariant subspaces cannot be dropped.
Here is a case where that condition holds automatically.

\begin{corollary}\label{C.sum_fd}
Let $D$ be a division ring, and $R$ the direct product
of the endomorphism rings of a family of finite-dimensional
$\!D\!$-vector-spaces, $R=\prod_{i\in I} \r{End}(V_i)$
\textup{(}i.e., a direct product of
full matrix rings of various sizes over $D).$
Then every element of $R$ has a strong inner
inverse under multiplication.
\end{corollary}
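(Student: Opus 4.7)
The plan is to reduce the statement to Theorem~\ref{T.main} coordinatewise. Since $R = \prod_{i \in I} \r{End}(V_i)$ is a direct product of monoids under multiplication, any monoid relation holds in $R$ precisely when it holds in each factor. In particular, invoking Definition~\ref{D.strong_inner} via the defining relations~\eqref{d.many_rels} (or equivalently \eqref{d.few_rels1} or~\eqref{d.few_rels2}), for $x = (x_i)_{i \in I} \in R$, an element $y = (y_i)_{i \in I} \in R$ is a strong inner inverse to $x$ in $R$ if and only if, for every $i \in I$, $y_i$ is a strong inner inverse to $x_i$ in $\r{End}(V_i)$.

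So it suffices to construct a strong inner inverse to each $x_i$ in $\r{End}(V_i)$ separately, and then assemble the results. For this, I would apply the first paragraph of Theorem~\ref{T.main}: each $V_i$ is finite-dimensional, and we may use the trivial decomposition consisting of the single $x_i$-invariant summand $V_i$ itself. This decomposition satisfies the hypothesis of the theorem (the summand is finite-dimensional), which then delivers a strong inner inverse $y_i$ to $x_i$. Assembling these coordinates gives $y = (y_i)_{i \in I} \in R$, which by the observation above is a strong inner inverse to $x$.

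There is no serious obstacle here: the argument is a clean reduction to the finite-dimensional case already handled by Theorem~\ref{T.main}. The only point deserving emphasis is that ``strong inner inverse'' is defined in terms of a fixed family of monoid relations, so it is both preserved and reflected by the projections $R \to \r{End}(V_i)$; this legitimates building $y$ coordinate-by-coordinate, even when the index set $I$ is infinite.
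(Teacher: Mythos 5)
Your proof is correct and is essentially the paper's argument: both reduce to the first paragraph of Theorem~\ref{T.main} applied to finite-dimensional spaces. The paper applies the theorem once to $V=\bigoplus_{i\in I}V_i$ with the $V_i$ as the $\!x\!$-invariant summands, using the clause that the resulting $y$ carries each summand into itself to conclude $y\in R$; your coordinatewise assembly, justified by the fact that the defining relations~\eqref{d.many_rels} are monoid relations and hence hold in a direct product of monoids if and only if they hold in each factor, is an equivalent repackaging of the same reduction.
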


\begin{proof}
Apply the preceding theorem with $V=\bigoplus V_i,$
and $R$ identified with the ring of endomorphisms of $V$ that
carry each $V_i$ into itself.
\end{proof}

We mentioned earlier that for $M$ a monoid,
the statement that every element
of $M$ has a strong inner inverse does not entail
that such strong inner inverses are unique.
For an explicit example, consider the monoid $\r{End}(V),$
where $V$ is a vector space with basis $\{b_1,\dots,b_n\}$
for some $n>1,$ and $x$ and $y$ again act by right and left shifts.
Then $x$ and $y$ are nilpotent, and do not commute.
Since $x$ is nilpotent, $1+x$ is invertible, hence conjugation
by that element is an automorphism of $\r{End}(V_i)$ which
fixes $x$ but not $y.$
Hence $(1+x)^{-1} y\,(1+x)$ is a strong inner inverse
to $x$ distinct from $y.$

One may ask

\begin{question}\label{Q.x_1...}
Given endomorphisms $x_1,\dots,x_r$ of a vector space $V$
over a field, or more generally, over a division ring,
under what natural conditions can we find
strong inner inverses $y_1,\dots,y_r$ to these elements, such that
the monoid generated by $x_1,\dots,x_r$ and $y_1,\dots,y_r$
is an inverse monoid?
\end{question}

If $r>1,$ such strong inner inverses need not exist for general
$x_1,\dots,x_r\in\r{End}(V),$ even if $V$ is finite-dimensional.
For instance, if we take $x_1$ and $x_2$
to be noncommuting idempotents, no submonoid
of $\r{End}(V)$ containing them satisfies~\eqref{d.idpts}.
A sufficient condition is that there exist a basis $B$ for $V$
such that every $x_i$ carries each member of $B$ either to
another member of $B,$ or to $0,$ and acts in a one-to-one
fashion on those that it does not send to $0.$
But this is not necessary.
For example, if we take for $x_1,\dots,x_r$ any automorphisms of $V,$
then they generate a group, which is an inverse monoid; but if any
of them has determinant $\neq\pm 1,$ it cannot permute a basis of $V.$

(I will mention one result that has a vaguely related feel.
Given a vector space $V$ and a finite family of subspaces
$S_1,\dots,S_n,$ one may ask under what conditions there
exists a basis $B$ for $V$ such that each $S_i$ is the
subspace spanned by a subset $B_i\subseteq B.$
By \cite[Exercise~6.1:16]{245}, such
a basis exists if and only if the lattice of subspaces
generated by $S_1,\dots,S_n,$ is distributive.)

There is an interesting analog,
for families of endomorphisms $x_1,\dots,x_r$ of vector spaces,
of the class of $\!x\!$-basic spaces.
Namely, for every finite connected subgraph $S$ of the Cayley
graph of the free group on $r$ generators $g_1,\dots,g_r,$ let $V_S$ be
a vector space with a basis $\{b_s\}$ indexed by the vertices $s$
of $S,$ and for $i=1,\dots,r,$ let $x_i$ act on $V_S$ by taking
$b_s$ to $b_t$ if $S$ has a directed edge from $s$ to $t$
indexed by $g_i,$ or to $0$ if no edge
indexed by $g_i$ comes out of $s;$
and likewise, let $x^*_i$ act by taking $b_t$ back to $b_s$ in cases
of the first sort, while taking $b_t$ to $0$ if $t$ has no
edge indexed by $g_i$ coming into it.
From a description of the free inverse monoid on $r$ generators due
to Munn \cite{WDM_trees} \cite[\S VIII.3]{MP}, \cite[\S 6.4]{MVL},
one finds that the algebra of operations on $\bigoplus_S V_S$
generated by the resulting maps $x_i$ and $x^*_i$ is isomorphic to
the monoid algebra of the free inverse monoid on $r$ generators.

\section{\dots but some do not}\label{S.ceg}

Let us show that in Theorem~\ref{T.main}, the hypothesis that
$V$ have a decomposition as a direct sum
of well-behaved $\!x\!$-invariant subspaces cannot be dropped.
We will use the next result, which concerns
monoids of ordinary (everywhere defined, not
necessarily one-to-one) endomaps of sets.

\begin{lemma}\label{L.bigcap}
Let $x$ be an endomap of a set $S.$
Then a necessary condition for $x$ to have a strong inner inverse
in the monoid of all endomaps of $S$ is
\begin{equation}\begin{minipage}[c]{35pc}\label{d.bigcap}
$x\,(\,\bigcap_{n\geq 0}\,x^n(S))\ =\ \bigcap_{n\geq 0}\,x^n(S).$
\end{minipage}\end{equation}
\end{lemma}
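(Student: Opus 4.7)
The plan is to prove both inclusions in~\eqref{d.bigcap}, with the nontrivial one being the reverse inclusion $\bigcap_n x^n(S)\subseteq x\bigl(\bigcap_n x^n(S)\bigr)$.

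The inclusion $x\bigl(\bigcap_n x^n(S)\bigr)\subseteq \bigcap_n x^n(S)$ is immediate and holds for any endomap: if $s\in x^n(S)$ for all $n$, then $x(s)\in x^{n+1}(S)\subseteq x^n(S)$ for all $n$, so $x(s)\in\bigcap_n x^n(S)$.

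For the reverse inclusion, suppose $y$ is a strong inner inverse of $x$ in the monoid of endomaps of $S$, and let $s\in\bigcap_n x^n(S)$. I propose to show that the preimage $y(s)$ again lies in $\bigcap_n x^n(S)$, and that $x(y(s))=s$. The second of these is easy: since $s\in x(S)$, write $s=x(t)$, and then the defining relation $xyx=x$ gives $x(y(s))=xyx(t)=x(t)=s$. The first is the key point. To prove that $y(s)\in x^n(S)$ for each $n\geq 0$, I will use the identity
\begin{equation*}
x^n\, y^{n+1}\, x^{n+1}\ =\ y\, x^{n+1}
\end{equation*}
in $\F$, which is the instance of~\eqref{d.many_rels} with $(i,j,k)=(n,n+1,n+1)$ (all inequalities $0\leq i\leq j$ and $0\leq k\leq j$ are satisfied). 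Since $x$ and $y$ satisfy the relations of $\F$, this identity holds for the given endomaps $x,y$ of $S$. Writing $s=x^{n+1}(t)$ for some $t\in S$ (using $s\in x^{n+1}(S)$) and applying both sides to $t$ gives
\begin{equation*}
x^n\bigl(y^{n+1}(s)\bigr)\ =\ y\, x^{n+1}(t)\ =\ y(s),
\end{equation*}
exhibiting $y(s)$ as an element of $x^n(S)$, as required.

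I expect no substantial obstacle: the essential idea is that among the inverse-monoid relations there is one that rewrites $y$ as $x^n$ applied after a suitable word, whenever one is operating on an element of the form $x^{n+1}(t)$. Identifying the right instance of~\eqref{d.many_rels} is the only thing to notice, and once $y(s)$ is shown to be an element of every $x^n(S)$ that $x$ maps to $s$, the lemma follows.
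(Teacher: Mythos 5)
Your proposal is correct and follows essentially the same route as the paper: both reduce to showing $y(s)\in\bigcap_n x^n(S)$ together with $xy(s)=s,$ via the identity $y(s)=x^n\bigl(y^{n+1}(s)\bigr)$ (the paper derives the same equation, indexed as $ys=x^{n-1}y^ns,$ from $s=x^ny^ns$ and the relation $y\,x^ny^n=x^{n-1}y^n$ rather than from the $(n,n+1,n+1)$ instance of~\eqref{d.many_rels} applied to a preimage $t,$ but the computation is the same).
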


\begin{proof}
Suppose $x$ has a strong inner inverse $y.$
Since in~\eqref{d.bigcap}, the relation ``$\subseteq$'' clearly
holds, we must show the reverse inclusion.

So consider any $s\in \bigcap x^n(S).$
In view of the relations $x^n = x^n y^n x^n,$ we have
\begin{equation}\begin{minipage}[c]{35pc}\label{d.s=xnyns}
$s\ =\ x^n y^n s$ \ for all $n\geq 0.$
\end{minipage}\end{equation}
If we take $n\geq 1,$ apply $y$ to both sides of this equation,
and then apply to
the right-hand side the second relation of~\eqref{d.few_rels2}
with $n$ for $j,$ we get $y s = x^{n-1} y^n s.$
Hence $ys\in \bigcap x^n(S);$ and applying $x$ to both sides of
this relation, and invoking the $n=1$ case of~\eqref{d.s=xnyns},
we conclude that $s\in x\,(\,\bigcap x^n(S)),$ as desired.
\end{proof}

We shall see in the next section
that the condition of the above lemma is sufficient as well as
necessary; but we only need necessity for the example below.

That example will be obtained by slightly tweaking
the well-behaved example, implicit in Theorem~\ref{T.main},
of a space $V$ which is a direct sum of $\!x\!$-basic subspaces
of all natural number dimensions.

\begin{proposition}\label{P.ceg}
Let $V$ be a vector space over a
division ring $D,$ with a basis consisting
of elements $b_{n,i}$ for all positive integers $n$ and $i$
with $i\leq n,$
and one more basis element, $b_+;$ and let $x\in\r{End}(V)$ be given by
\begin{equation}\begin{minipage}[c]{35pc}\label{d.ceg_x}
$x(b_{n,i})=b_{n,i+1}$ if $i<n,$\quad
$x(b_{n,n})=b_+$ for all $n,$\quad $x(b_+)=0.$
\end{minipage}\end{equation}
Then $x$ has no strong inner inverse in $\r{End}(V).$
\textup{(}In fact, it has no strong inner inverse in the monoid
of all {\em set}-maps $V\to V.)$
\end{proposition}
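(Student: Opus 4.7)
The plan is to apply Lemma~\ref{L.bigcap} with $S=V$. Any strong inner inverse $y$ to $x$ in $\r{End}(V)$ generates the same submonoid with $x$ whether we view the pair inside $\r{End}(V)$ or inside the monoid of all set-maps $V\to V;$ so a strong inner inverse in $\r{End}(V)$ would in particular be a strong inner inverse in that larger monoid. Hence it suffices to show that~\eqref{d.bigcap} fails for this $x,$ which will give the stronger statement in parentheses as a free bonus.

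The first step is to compute $x^n(V)$ from~\eqref{d.ceg_x}. Iterating on basis elements, one checks by induction that $x^n(b_{m,i})=b_{m,i+n}$ when $i+n\leq m,$ that $x^n(b_{m,i})=b_+$ precisely when $i+n=m+1,$ and that $x^n(b_{m,i})=0$ as soon as $i+n\geq m+2;$ also $x^n(b_+)=0$ for $n\geq 1.$ Since the $b_{m,i}$ together with $b_+$ form a basis of $V,$ this identifies $x^n(V)$ as the $\!D\!$-span of $\{b_{m,j}:n+1\leq j\leq m\}\cup\{b_+\}.$

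Now take the intersection over all $n.$ Every basis vector $b_{m,j}$ falls out of $x^n(V)$ once $n\geq j,$ while $b_+$ lies in every $x^n(V)$ (witnessed by $x^n(b_{n,1})=b_+$ for each $n\geq 1$). Therefore $\bigcap_{n\geq 0}x^n(V)=D\,b_+.$ But $x(b_+)=0,$ so $x(\bigcap_n x^n(V))=\{0\},$ which is strictly smaller than $\bigcap_n x^n(V).$ This violates~\eqref{d.bigcap}, and Lemma~\ref{L.bigcap} yields the conclusion.

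There is no serious obstacle here; the example has been engineered precisely so that the single ``attractor'' vector $b_+,$ glued onto the direct sum of all finite-dimensional $\!x\!$-basic pieces, survives into the deep intersection $\bigcap_n x^n(V)$ while being annihilated by $x.$ The only real work is the routine bookkeeping of where $x^n$ sends each basis element, which is what the second paragraph records.
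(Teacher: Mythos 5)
Your proof is correct and follows exactly the paper's route: the paper likewise observes that $\bigcap_{n\geq 0}x^n(V)$ is the line spanned by $b_+,$ which $x$ annihilates, so~\eqref{d.bigcap} fails and Lemma~\ref{L.bigcap} applies. Your second paragraph merely spells out the bookkeeping that the paper dismisses with ``clearly,'' and your opening remark about submonoids correctly justifies the parenthetical strengthening.
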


\begin{proof}
Clearly, $\bigcap_{n\geq 0} x^n(V)$
is the $\!1\!$-dimensional subspace of $V$ spanned by $b_+.$
The image of this subspace under $x$ is the zero subspace,
so~\eqref{d.bigcap} is not satisfied, hence Lemma~\ref{L.bigcap}
gives the desired conclusion.
\end{proof}

The next result gets further mileage out of the above example.
The first assertion of that proposition
answers a question posed in an earlier version of~\cite{PN+JvS};
the second shows that in Lemma~\ref{L.bigcap} and
Proposition~\ref{P.ceg}, the relations characterizing
a strong inner inverse $y$ to $x$ cannot be replaced by the subset
consisting of the relations
$x^n y^n x^n = x^n$ and $y^n x^n y^n = y^n$ for all $n\geq 0.$

\begin{proposition}\label{P.unit_reg}
Let $V$ and $x$ be as in Proposition~\ref{P.ceg}.
Let $V_n,$ for each $n\geq 1,$ be the
subspace of $V$ spanned by $b_{n,1},\dots,b_{n,n},$
let $V_+$ be the subspace spanned by $b_+,$
let $R_0$ be the ring of all endomorphisms of $V$ that carry each
of these subspaces into itself, and let $L$ be the space of
endomorphisms of $V$ of finite rank.

Then $R_0 + L$ is a {\em unit regular} ring $R$ containing
$x,$ but containing no $y$ that satisfies
$x^n y^n x^n = x^n$ for all~$n.$

On the other hand, the full ring $\r{End}(V)$ contains an
element $y$ which satisfies both $x^n y^n x^n = x^n$ and
$y^n x^n y^n = y^n$ for all $n\geq 0,$
though by Proposition~\ref{P.ceg}, $y$ is not a strong inner
inverse to $x.$
\end{proposition}

\begin{proof}
Let us prove the above claims in reverse
order: first (easiest) the existence of a $y$ as
in the final sentence, then the non-existence result of the preceding
sentence, and finally, the unit-regularity of $R=R_0+L.$

A $y$ as in the final sentence of the proposition is defined by the
familiar formulas $y(b_{n,i})=b_{n,i-1}$ for $i>1,$ together with
the unexpected formulas $y(b_+)=b_{1,1}$ and
$y(b_{n,1})=b_{n+1,1}$ $(n\geq 1).$
In checking that for every $n$ we have $x^n y^n x^n = x^n,$
let us think of that relation as saying that $x^n y^n$ fixes
all elements of $x^n(V).$
Now $x^n(V)$ is spanned by the elements $b_{m,i}$ with $i>n,$ and $b_+;$
and it is straightforward to check
that elements of each of these sorts are fixed by $x^n y^n.$
Similarly, the desired relation $y^n x^n y^n = y^n$ says that
all elements of $y^n(V)$ are fixed by $y^n x^n.$
Now when $y$ is applied to a basis element $b_{m,i},$ we see that
the difference $m-i$ is increased by $1,$ whether we
are in the case $i>1$ or $i=1;$ so the result of applying
$y^n$ to a basis element $b_{m,i}$ is a basis element of the
form $b_{m',i'}$ with $m'-i'\geq n.$
It is immediate to check that any basis element with
this property is fixed by $y^n x^n.$
On the other hand, in evaluating $y^n x^n y^n(b_+),$ we can
use the fact that $b_+\in x^n(V),$ so by our observations
on the relations $x^n y^n x^n = x^n,$
$b_+$ is fixed under $x^n y^n.$
Now applying $y^n,$ we get $y^n x^n y^n(b_+)=y^n(b_+),$ as desired.

We turn next to $R = R_0 + L.$
It is clear that $R$ is a ring and $L$ an ideal of $R.$
The endomorphism of $V$ that carries
all basis elements of the form $b_{n,n}$ to $b_+,$ and
all other basis elements to zero, has rank $1,$ hence belongs to $L,$
and if we subtract it from $x$ we get a member of $R_0;$ so $x\in R.$

To prove the nonexistence of a $y\in R$ satisfying the
relations $x^n y^n x^n = x^n,$
consider any $y=y_0+y_1\in R,$ where $y_0\in R_0$ and $y_1\in L.$
Since $y_1$ has finite rank, its range lies in the sum of
$V_+$ and finitely many of the $V_n.$
Also, being a member of $V,$ the element $y(b_+)$ lies in the sum of
$V_+$ and finitely many of the $V_n.$
Hence we can choose an $n_0>0$ such that both the space
$y_1(V)$ and the element $y(b_+)$ have zero projection in
all of the spaces $V_n$ for $n\geq n_0.$

Now consider the element $x^{n_0} y^{n_0} x^{n_0}(b_{n_0,1}).$
The $x^{n_0}$ that acts on $b_{n_0,1}$ carries it to $b_+,$ and
subsequent iterations of $y$ will, by our choice of $n_0,$
carry this into a member of $V_+ + \sum_{n<n_0} V_n.$
But every member of this sum is annihilated by $x^{n_0};$
so $x^{n_0} y^{n_0} x^{n_0}(b_{n_0,1})=0,$
though $x^{n_0} (b_{n_0,1})=b_+.$
Hence $x^{n_0} y^{n_0} x^{n_0}\neq x^{n_0},$ as claimed.

It remains to show that $R$ is unit regular; i.e., that every
element has an inner inverse which is a unit.
We shall recall sufficient conditions for this to hold,
given in \cite[Lemma~3.5, p.\,600]{GB}
for a general ring $R$ with an ideal $L,$
in terms of properties of $R,$ $R/L$ and $L,$
and verify that these hold in the case at hand.
(I am indebted to Ken Goodearl and Pace Nielsen for supplying the
tools for this part of the proof.)

First, we must know that $R$ is regular.
This follows from \cite[Lemma~1.3]{KG}, since $L,$
being an ideal of $\r{End}(V),$ is regular,
and $R/L\cong R_0/(R_0\cap L),$ a homomorphic image of
the regular ring $R_0,$ is also regular.

Next, we must know that $eLe$ is unit regular for every
idempotent $e\in L.$
But $eLe\cong\r{End}(eV),$ the endomorphism ring of
a finite dimensional vector space, and so is
unit regular by \cite[Theorem~4.1(a)$\iff$(c)]{KG}.

We must also know that $R/L$ is unit regular.
For this we again use the fact
that $R/L\cong R_0/(R_0\cap L),$ this time combined with the fact
that $R_0,$ a direct product of unit regular rings,
is unit regular.

Finally, we need to know that every unit of $R/L$ lifts to
a unit of $R.$
Now a unit of $R/L$ and its inverse will be images of
elements $u,v\in R_0.$
Hence $uv-1$ and $vu-1$ will both lie in $R_0\cap L,$ hence
will be members of $R_0$ whose components equal zero in all but
finitely many of the algebras $\r{End}(V_n)$ and $\r{End}(V_+).$
Hence if we modify $u$ and $v$ only in their behavior on the
finitely many factors where they are not inverse to one another,
replacing their actions there with, say, the identity elements
of those factors, we get elements $u', v'\in R_0$ which {\em are}
inverse to one another, and these give the desired liftings to $R$
of the original unit and its inverse in $R/L.$
\end{proof}

In contrast to the first assertion of the above proposition, we know
from \cite[Theorem~4.8]{PN+JvS}, mentioned in~\S\ref{S.bkgd}, that
the $x$ of the above example has inner inverses in $R$ which satisfy
any {\em finite} subset of the relations~\eqref{d.many_rels}.

The referee has asked whether in a regular ring, a
product of two elements with strong
inner inverses must have a strong inner inverse.
Let us show that, in fact, the product of an element having a strong
inner inverse and an invertible element can fail to have a strong
inner inverse.
Clearly, this is equivalent to saying that the product of an
element $x$ {\em not} having a strong inner inverse and an
invertible element $u$
can have a strong inner inverse, and that is the form in which
it will be convenient to describe the example.

Let $V$ and $x\in\r{End}(V)$ again be as in Proposition~\ref{P.ceg},
and let $u$ be the automorphism of
$V$ which acts on our basis by the following permutation:
\begin{equation}\begin{minipage}[c]{35pc}\label{d.unit}
$u(b_{n,i})=b_{n,i-1}$ for $1<i\leq n,$\quad
$u(b_{n,1})=b_{n,n}$ for all $n,$\quad
$u(b_+)=b_+.$
\end{minipage}\end{equation}
Then $xu$ fixes all elements $b_{n,i}$ with $i>1,$ carries
all elements $b_{n,1}$ to $b_+,$ and annihilates that element.
We know from Proposition~\ref{P.ceg} that $x$ does not have
a strong inner inverse; however $xu$ does, by
Theorem~\ref{T.main}, using the decomposition of $V$ as the direct sum
of the one-dimensional subspaces with bases $\{b_{n,i}\}$ $(1<i\leq n),$
the $\!2\!$-dimensional subspace with basis $\{b_{1,1},\,b_+\},$
and the one-dimensional subspaces with bases
$\{b_{n+1,1}-b_{n,1}\}$ $(n\geq 1).$
Each of these subspaces is easily seen to be $\!xu\!$-invariant,
hence that theorem is indeed applicable.

(J.\,\v{S}ter (personal communication) has pointed out a quicker
way to see that the product of the above $x$ with some invertible
element has a strong inner inverse, though it does not
give that invertible element as explicitly:
By Proposition~\ref{P.unit_reg}, $x$ has an invertible
inner inverse $u'$ in $R_0+I\subseteq\r{End}(V).$
This makes $xu'$ idempotent, and hence its own strong inner inverse.)

The referee has also raised the following question.
Recall that for $R$ a ring, its {\em Pierce stalks} (the
stalks of the {\em Pierce sheaf} of $R)$ are the
factor-rings $R/I,$ as $I$ ranges over the maximal members of
the partially ordered set of proper ideals of $R$ generated by families
of central idempotents \cite{Pierce}, \cite[p.\,354]{B+B+R}.
Any ring $R$ is a subdirect product of its Pierce stalks.

\begin{question}\label{Q.Pierce}
If $x$ is an element of a regular ring $R,$ and the image
of $x$ in every Pierce stalk $R/I$ has a strong inner inverse,
must $x$ have a strong inner inverse in $R$?
\end{question}

If the answer is negative, a counterexample might look
something like the following.
Start with simple regular rings $R_i$ $(i\geq 1)$ in which
all elements have strong inner inverses (for instance, full
matrix rings over fields), and let $R\subseteq\prod R_i$ be a subdirect
product of the $R_i$ whose only central idempotents are
the elements of $\prod R_i$ with finitely many components $1$
and all other components $0,$
and those with finitely many components $0$
and all other components $1.$
Then the Pierce stalks of $R$ will be the rings $R_i,$ and
one stalk ``at infinity'', $R_\infty.$
Suppose now that $x$ and $y$ are elements of $R$
such that, of the relations~\eqref{d.few_rels1},
all but one hold between their images $x_1,\,y_1\in R_1,$
all but another hold between their
images $x_2,\,y_2\in R_2,$ and so on.
Then all these relations hold between
their images $x_\infty,\,y_\infty\in R_\infty,$ while in each
of the rings $R_i,$ the image $x_i$ of $x$ has, by our
hypothesis on those rings, {\em some} strong inner inverse.
So the hypotheses of Question~\ref{Q.Pierce} are satisfied.
However, there is no evident reason why it should
be possible to modify all the $y_i$ so as to get
a strong inner inverse to $x$ which still lies in the
chosen subring $R\subseteq\prod R_i.$
On the other hand, it is not clear how one might come up with
an $R$ in which such an inner inverse was guaranteed not to exist.

Even if the above idea of what a counterexample could look
like is roughly correct, it may be naive to assume that
one could have only {\em one} of the equations~\eqref{d.few_rels1}
fail in each $R_i.$
This suggests

\begin{question}\label{Q.sets_of_rels}
For which subsets $S$ of the set of equations~\eqref{d.few_rels1}
is it the case that there exist a ring $R$ and elements $x,y\in R$
which satisfy all the relations in $S,$ but
none of the other relations of~\eqref{d.few_rels1}?
\end{question}

\section{Digression: A characterization of set-maps having strong inner inverses}\label{S.actions}

Here is the promised strengthening of Lemma~\ref{L.bigcap}.
(It will not be used in subsequent sections.)

\begin{theorem}\label{T.bigcap}
Let $x$ be an endomap of a set $S.$
Then condition~\eqref{d.bigcap} is necessary and sufficient for
$x$ to have a strong inner inverse $y$ in the monoid of
all endomaps of $S.$
\end{theorem}

\begin{proof}
In view of Lemma~\ref{L.bigcap}, we only have to prove
sufficiency, so assume $x$
satisfies~\eqref{d.bigcap}, and let us construct~$y.$

For every $s\in S,$ we define its ``depth'',
\begin{equation}\begin{minipage}[c]{35pc}\label{d.depth}
$d(s)\ =$ greatest integer $n\geq 0$ such
that $s\in x^n(S)$ if this exists,
or $\infty$ if $s\in\bigcap_{n\geq 0} x^n(S).$\\
(In statements such as~\eqref{d.dxs} and~\eqref{d.ds-1}
below, we will understand $\infty+1=\infty=\infty-1.)$
\end{minipage}\end{equation}
In particular, $d(s)=0$ if and only if $s\notin x(S).$
Clearly,
\begin{equation}\begin{minipage}[c]{35pc}\label{d.dxs}
For all $s\in S,$ we have $d(x(s))\geq d(s)+1.$
\end{minipage}\end{equation}
Moreover, I claim that
\begin{equation}\begin{minipage}[c]{35pc}\label{d.ds-1}
If $d(s)>0,$ then
$s$ can be written as $x(t)$ for some $t$ with $d(t) = d(s)-1.$
\end{minipage}\end{equation}
Indeed, for $0<d(s)<\infty,$~\eqref{d.ds-1} is straightforward,
while the case where $d(s)=\infty$ is our assumption~\eqref{d.bigcap}.

We will begin the construction of $y$ by defining it on all $s\in x(S).$
In this case, the criterion for choosing $y(s)$ will be fairly natural,
but with one nonobvious restriction, applying to elements $s\in S$
that satisfy
\begin{equation}\begin{minipage}[c]{35pc}\label{d.pre-stable}
$d(x^n(s))\ =\ d(s)+n$ \quad for all $n\geq 0$ (cf.~\eqref{d.dxs}).
\end{minipage}\end{equation}
For such an $s,$ {\em if} the function $y$ that we shall define below
satisfies
\begin{equation}\begin{minipage}[c]{35pc}\label{d.stable}
$y\,x^n(s)\ =\ x^{n-1}(s)$ \quad for all $n \geq 1,$
\end{minipage}\end{equation}
let us call $s$ {\em $\!y\!$-stable}.
We now specify $y$ on $x(S)$ by the rule
\begin{equation}\begin{minipage}[c]{35pc}\label{d.y_on_xS}
For each $s\in x(S),$ let $y(s)$ be an
element $t$ such that $s = x(t)$ and $d(t) = d(s)-1$
(as in~\eqref{d.ds-1}).
Moreover, if there are any $s\in S$
satisfying~\eqref{d.pre-stable}, then
we make our choice of $y$ satisfy~\eqref{d.stable}
for {\em at least one} such $s$
(i.e., we make at least one such $s$ $\!y\!$-stable).
\end{minipage}\end{equation}

Can we achieve the second condition of~\eqref{d.y_on_xS}?
If $x$ is one-to-one on the $\!x\!$-orbit of some $s$
satisfying~\eqref{d.pre-stable},
we can clearly define $y$ on elements $x^n(s)$ by~\eqref{d.stable}.
If, on the other hand,
that orbit must eventually become periodic,
and taking a new $s$ in the periodic part,
$x$ will be one-to-one on the orbit of that element,
and we can define $y$ on that orbit by~\eqref{d.stable}.
So the second sentence of~\eqref{d.y_on_xS} can indeed be achieved.

Note that, in view of~\eqref{d.dxs}, and the condition
$d(t)=d(s)-1$ in~\eqref{d.y_on_xS}, we can say that
\begin{equation}\begin{minipage}[c]{35pc}\label{d.ynxn}
Once $y$ has been defined on $x(S)$ so as to satisfy~\eqref{d.y_on_xS},
we can use this partial definition of $y$
to evaluate $y^n(s)$ for any $s\in S$ with $d(s)\geq n.$
\end{minipage}\end{equation}

We now want to define $y$ on elements $s\notin x(S).$

A fairly easy case is that in which $d$ makes a ``jump'' somewhere
on the $\!x\!$-orbit of $s$:
\begin{equation}\begin{minipage}[c]{35pc}\label{d.d>n}
If $s\in S$ with $d(s)=0,$ and if
for some positive integer $n,$ $d(x^n(s)) > n,$
then letting $n$ be the least value for which this is true,
we define $y(s)=y^{n+1} x^n (s).$
\end{minipage}\end{equation}

In view of the hypothesis of~\eqref{d.d>n}
that $d(x^n(s))>n,$ we see by~\eqref{d.ynxn}
that $y^{n+1} x^n(s)$ is determined by the partial
definition of $y$ that we have so far; so~\eqref{d.d>n} makes sense.

Finally, let us define $y$ on those $s\notin x(S)$ such that
$d(x^n(s)) = n$ for all $n\geq 0.$
We will need the following equivalence relation.
\begin{equation}\begin{minipage}[c]{35pc}\label{d.approx}
For $s,s'\in S$ such that
$d(x^n(s)) = n = d(x^n(s'))$ for all $n\geq 0,$
we shall write $s\approx s'$
if there exists some $n$ such that $x^n(s) = x^n(s').$
\end{minipage}\end{equation}

We now specify
\begin{equation}\begin{minipage}[c]{35pc}\label{d.if_d=n}
If $s\in S$ satisfies $d(x^n(s)) = n$ for all $n\geq 0,$
then we take $y(s)$ to be any $\!y\!$-stable element
(any element satisfying~\eqref{d.stable}), subject only to
the condition that if $s\approx s',$ then $y(s)=y(s').$
\end{minipage}\end{equation}

To see that we can do this, note that the situation
``$d(x^n(s)) = n$ for all $n\geq 0$'' is a case
of~\eqref{d.pre-stable},
hence by the second sentence of~\eqref{d.y_on_xS}, if there
exist $s$ as in~\eqref{d.if_d=n}, then there also exist $\!y\!$-stable
elements, which is all we need to carry out~\eqref{d.if_d=n}.

This completes the construction of $y.$
Let us record two immediate consequences of the
first sentence of~\eqref{d.y_on_xS}.
First, the fact that $y(s)$ is chosen
to be a preimage of $s$ under $x$ can be reworded:
\begin{equation}\begin{minipage}[c]{35pc}\label{d.xys}
If $d(s)>0,$ then $xy(s)=s.$
\end{minipage}\end{equation}
Second, combining the condition on $d(t)$
in~\eqref{d.y_on_xS} with the fact that for $s\notin xS,$
$d(y(s))\geq 0> d(s)-1,$ we see that
\begin{equation}\begin{minipage}[c]{35pc}\label{d.dys}
For all $s\in S,$ we have $d(y(s))\geq d(s)-1$ (cf.~\eqref{d.dxs}).
\end{minipage}\end{equation}

Let us now show that $x$ and the
$y$ we have constructed satisfy the relations~\eqref{d.few_rels1}.
The first of these, $x y^j x^j = y^{j-1} x^j,$
says that for every $s\in S,$ the element $y^{j-1} x^j(s)$ is fixed
under $xy.$
This is easy: by~\eqref{d.dxs} and~\eqref{d.dys}, every $s\in S$
satisfies $d(y^{j-1} x^j(s)) \geq 1,$ so by~\eqref{d.xys},
$y^{j-1} x^j(s)$ is indeed fixed under~$xy.$

The other relation from~\eqref{d.few_rels1},
\begin{equation}\begin{minipage}[c]{35pc}\label{d.yjxjy=}
$y^j x^j y\ =\ y^j x^{j-1},$
\end{minipage}\end{equation}
automatically holds when applied to elements $s$ with $d(s) > 0,$
since the two sides differ by a right factor of $xy,$ again
allowing us to apply~\eqref{d.xys}.

So suppose $d(s) = 0.$

Assume first that $s$ has the property that
$d(x^n(s)) > n$ for some $n,$
and as in~\eqref{d.d>n}, let $n\geq 1$ be the least such value.
Applying the two sides of~\eqref{d.yjxjy=} to $s,$
and using~\eqref{d.d>n} on the left-hand side, we see
that we need to prove
\begin{equation}\begin{minipage}[c]{35pc}\label{d.need}
$y^j x^j y^{n+1} x^n(s)\ =\ y^j x^{j-1}(s).$
\end{minipage}\end{equation}
Now because we have assumed the element $s$ satisfies
$d(x^n(s))\geq n+1,$ we see that on the left-hand
side of~\eqref{d.need}, each of the terms in the string $y^{n+1}$ gets
applied to an element of positive depth; hence by~\eqref{d.xys},
we can repeatedly cancel sequences $x\,y$ at the interface between
the strings $x^j$ and $y^{n+1}.$
The outcome of these cancellations depends on the
relation between $j$ and $n$ in~\eqref{d.need}.
Suppose first that $j>n.$
Then the above repeated cancellations
turn the left-hand side of~\eqref{d.need} into
$y^j x^{j-n-1} x^n (s),$ which equals the right-hand, as desired.

If, rather, $j\leq n,$ then the cancellation mentioned
turns the left-hand side of~\eqref{d.need} into
$y^j y^{n+1-j} x^n (s)=y^{n+1} x^n (s),$ which by~\eqref{d.d>n}
is the value we have assigned to $y(s).$
We want to compare this with
the right-hand side of~\eqref{d.need}.
To do so, let us write that expression as $y (y^{j-1} x^{j-1})(s).$
Since $j<n,$ and $n$ is chosen as in~\eqref{d.d>n},
we have $d(x^{j-1}(s)) = j-1,$ hence by~\eqref{d.y_on_xS},
$d(y^{j-1} x^{j-1}(s)) = 0.$
So what we need to show is that $y,$ when applied to
the depth-zero element $y^{j-1} x^{j-1}(s),$
gives the same output as when applied to the depth-zero element $s.$
I claim that this will again hold by~\eqref{d.d>n}.
To see that, we have to know how the element $y^{j-1} x^{j-1}(s),$
and in particular, its depth, behave under $n$ successive
applications of $x.$
Now by~\eqref{d.xys}, the first $j{-}1$ of these $n$
applications of $x$ simply strip away
the same number of $\!y\!$'s, increasing $d$ by $1$ at each step.
Hence the result of applying $x^{j-1}$ to $y^{j-1} x^{j-1}(s)$
is $x^{j-1}(s);$ so from that point on, we get the same outputs
as when we apply the corresponding powers of $x$ to $s.$
So since~\eqref{d.d>n} applies to the evaluation of $y(s),$
it also applies to the evaluation of $y(y^{j-1} x^{j-1}(s)),$
and the resulting values are the same, as desired.

Finally, suppose we are in the case where $d(x^n(s)) = n$ for all $n.$
Then by~\eqref{d.if_d=n}, $y(s)$ will be a $\!y\!$-stable element,
i.e., will satisfy~\eqref{d.stable}.
This implies that it is fixed under $y^j x^j,$
so the result of applying the left-hand
side of~\eqref{d.yjxjy=} to $s$ is $y(s).$
When we apply the right-hand side of~\eqref{d.yjxjy=},
$y^j x^{j-1},$ to $s,$ if we write that operation
as $y(y^{j-1} x^{j-1}),$  then the factor $y^{j-1} x^{j-1}$ will
take $s$ to an element $s'$ again having $d(s') = 0.$
Moreover, by repeated application of~\eqref{d.xys}, $s'$ will
again satisfy $d(x^n(s')) = n$ for all $n,$ and will
have the same image as $s$ under $x^{j-1},$ hence will
be \mbox{$\!\approx\!$-equivalent} to $s.$
So by the last condition of~\eqref{d.if_d=n},
the images of $s$ and $s'$ under $y$ are the same.
So the two sides of~\eqref{d.yjxjy=}, applied to $s,$ give the
common value $y(s) = y(s'),$ completing the proof of the theorem.
\end{proof}

Pace Nielsen has pointed out that the proof of Lemma~\ref{L.bigcap}
can be modified so that the only relations on $x$ and $y$
called on are $xyx=x$ and $x^{n-1}y^n x^n = y x^n$ for
all $n>0$ (the $(1,1,1)$ and $(n-1,n,n)$
cases of~\eqref{d.many_rels}).
Namely, if we think of $x^{n-1}y^n x^n = y x^n$ as saying that
$x^{n-1}y^n$ and $y$ have the same effect on elements of $x^n S,$
then applied to elements $s\in\bigcap_{n\geq 0} x^n(S),$
this brings us directly to the relation $x^{n-1} y^n s = ys$ used in
that proof; after which only the relation $xyx=x$ is used.
(Contrast this with the second assertion
of Proposition~\ref{P.unit_reg},
which shows that we cannot get such a result using only the family
of relations $x^n y^n x^n = x^n$ and $y^n x^n y^n = y^n.)$
Consequently, Theorem~\ref{T.bigcap} can be strengthened to assert
the equivalence of three conditions on an endomap $x$ of a set:
(i)~condition~\eqref{d.bigcap},
(ii)~the existence of a strong inner inverse to $x,$ and
(iii)~the existence of an element $y$ satisfying the
relations $xyx=x,$ and $x^{n-1}y^n x^n = y x^n$ for all $n>0.$
The relations of~(iii) are strictly weaker than $y$ itself being a
strong inner inverse to $x,$ as may be seen from the
construction, in~\S\ref{S.var} below, of the
element $y'$ of~\eqref{d.y'}, which by~\eqref{d.y'_like_y} satisfies
those relations, but by~\eqref{d.neq} does not satisfy $y'=y'xy'.$

It is natural to ask whether~\eqref{d.bigcap} is also a sufficient
condition for an endomorphism $x$ of a vector space $V$ to
have a strong inner inverse in $\r{End}(V).$
Nielsen (personal communication) has an example showing that it is not.

\section{A closer look at $k\,\F$}\label{S.more}

In this and the next section we shall, for conceptual simplicity,
assume our division ring $D$ is a field $k,$
so that actions of $D\,\F$ on $\!D\!$-vector spaces $V,$
which for general $D$ are not actions by $\!D\!$-vector-space
endomorphisms, become actions of $k\,\F$ by
$\!k\!$-vector-space endomorphisms.
But the reader will see that virtually nothing about $k$
is used, so that the corresponding statements for $D\,\F,$
if desired, are available.

So let $k$ be a field, and
$V$ a countable-dimensional $\!k\!$-vector space with basis
$\{b_{n,h}\mid 1\leq h\leq n\},$ and let $x$
and $y$ be the endomorphisms of $V$ defined by
\begin{equation}\begin{minipage}[c]{35pc}\label{d.x,y}
\hspace*{-.15em}$x(b_{n,h})\ =\ b_{n,h+1}$\quad if
$h<n,\quad$ while\quad $x(b_{n,n})\ =\ 0,$\\
$y(b_{n,h})\ =\ b_{n,h-1}$\quad if\,
$h>1,$\quad\ while\,\quad $y(b_{n,1})\ =\ 0.$
\end{minipage}\end{equation}
By Theorem~\ref{T.main}, the monomials on the left-hand
side of~\eqref{d.many_rels} form a $\!k\!$-basis of the
subalgebra of $\r{End}(V)$ generated by $x$ and $y,$ and so, in
particular, are $\!k\!$-linearly independent.

But letting $V_n$ denote, for each $n,$ the subspace of $V$ spanned
by $b_{n,1},\dots,b_{n,n},$ consider the following four
monomials in $x$ and $y,$ and their actions on such a space $V_n:$
\begin{equation}\begin{minipage}[c]{35pc}\label{d.1,xy,yx,xyyx}
\hspace*{-.15em}$1$ fixes all $b_{n,h},$\\
$xy$ annihilates $b_{n,1},$ and fixes all other $b_{n,h},$\\
$yx$ annihilates $b_{n,n},$ and fixes all other $b_{n,h},$\\
$xyyx$ annihilates
$b_{n,1}$ and $b_{n,n},$ and fixes all other $b_{n,h}.$
\end{minipage}\end{equation}
These descriptions suggest that $1 + xyyx = xy + yx$ on each $V_n.$

What is wrong here?

The relation $1 + xyyx = xy + yx$
does in fact hold on all $V_n$ with $n>1.$
But for $n=1,$ the above considerations implicitly ``double-count''
the basis element $b_{1,1}$ in looking at the effect of $xyyx.$
On $V_1,$ the elements $xy,$ $yx$ and $xyyx$ all act
as $0,$ while $1$ does not;
so the asserted relation fails on $V_1$ -- and only there.

This suggests that operators on $V$ that,
like $1 + xyyx - xy - yx,$ have ``small'' images,
may be of interest in understanding $k\,\F.$
Let us define on $V,$ for each $i\geq 1,$ operators
$\ell_i$ and $r_i,$ by
\begin{equation}\begin{minipage}[c]{35pc}\label{d.l_i,r_i}
\hspace*{-.15em}$\ell_i$ fixes, for each $n\geq i,$ the
element $b_{n,i},$ and annihilates all other\\
\hspace*{1em}elements $b_{n,h}$ $(n\geq 1,\ 1\leq h\leq n),$\\
$r_i$ fixes, for each $n\geq i,$ the
element $b_{n,n+1-i},$ and annihilates all other\\
\hspace*{1em}elements $b_{n,h}$ $(n\geq 1,\ 1\leq h\leq n).$
\end{minipage}\end{equation}
Here $\ell$ and $r$ are mnemonic for ``left'' and ``right'', since
if we list the basis of $V_n$ as $b_{n,1},\dots,b_{n,n},$
then $\ell_i$ projects to the $\!i\!$-th basis element
from the left (if any), and $r_i$ to the $\!i\!$-th from the right
(if any).

These operators are represented by elements of $k\,\F.$
Namely, identifying elements of that algebra with their actions on $V,$
it is not hard to check that for all $i\geq 1,$
\begin{equation}\begin{minipage}[c]{35pc}\label{d.l_i,r_i=}
\hspace*{-.15em}$\ell_i\ =\ x^{i-1} y^{i-1} - x^i y^i,$\\
$r_i\ =\ y^{i-1} x^{i-1} - y^i x^i.$
\end{minipage}\end{equation}
Elements with still smaller images are
given by products of the above operators.
Namely, for each $i,j\geq 1,$
\begin{equation}\begin{minipage}[c]{35pc}\label{d.r_il_j}
$\ell_i r_j$ fixes the single basis
element $b_{i+j-1,i},$ and annihilates all the other $b_{n,h}$
$(1\leq h\leq n).$
\end{minipage}\end{equation}

Let us note some relations which $x,$ $y,$ and the $\ell_i$
and $r_i$ satisfy.
First, the $i=1$ cases of~\eqref{d.l_i,r_i=} give the following
formulas, which can be applied to reduce any string of
$\!x\!$'s and $\!y\!$'s which
contains both letters to a linear combination of shorter strings
in $x,$ $y,$ $\ell_1$ and $r_1:$
\begin{equation}\begin{minipage}[c]{35pc}\label{d.xy,yx}
$xy\ =\ 1 - \ell_1,$\quad $yx\ =\ 1 - r_1.$
\end{minipage}\end{equation}

In the next list of relations
we make the temporary convention that $\ell_0$
and $r_0$ represent the zero operator.
Then we have the following equalities for all $i\geq 1$
(in which the $i=1$ cases each say that a certain product is $0).$
\begin{equation}\begin{minipage}[c]{35pc}\label{d.l_ix_etc}
$\ell_i\,x = x\,\ell_{i-1},$ \quad
$r_{i-1}\,x = x\,r_i,$ \quad
$\ell_{i-1}\,y = y\,\ell_i,$ \quad
$r_i\,y = y\,r_{i-1}.$
\end{minipage}\end{equation}
These relations are easily deduced from~\eqref{d.x,y}
and~\eqref{d.l_i,r_i}, and allow us to reduce expressions in
$x,$ $y$ and the $\ell_i$ and $r_i$ to linear combinations of monomials
in which no $\ell_i$ or $r_i$ occurs to the left of an $x$ or $y,$ and
which also contain no products $x\,r_1$ or $y\,\ell_1.$

The $i=1$ cases of the second and third relations
of~\eqref{d.l_ix_etc} can be seen to generalize to
\begin{equation}\begin{minipage}[c]{35pc}\label{d.=0}
$x^i r_i\ =\ 0,\quad y^i\ell_i\ =\ 0\quad (i\geq 1).$
\end{minipage}\end{equation}

Finally, from~\eqref{d.l_i,r_i} it is easy to see that
\begin{equation}\begin{minipage}[c]{35pc}\label{d.ll,rr,rl}
\hspace*{-.15em}$\ell_i^2 = \ell_i,$ while
$\ell_i\ell_j = 0$ for $i\neq j,$\\[.2em]
$r_i^2 = r_i,$ while
$r_ir_j = 0$ for $i\neq j,$\\[.2em]
$\ell_i r_j = r_j \ell_i$ for all $i,j.$
\end{minipage}\end{equation}

Using~\eqref{d.xy,yx}-\eqref{d.ll,rr,rl}, we can reduce any
element of $k\,\F$ to a $\!k\!$-linear combination
of elements of the following four sorts.
\begin{equation}\begin{minipage}[c]{35pc}\label{d.y1x}
$y^m$ $(m\geq 1),$\quad $1,$\quad $x^m$ $(m\geq 1).$
\end{minipage}\end{equation}
\begin{equation}\begin{minipage}[c]{35pc}\label{d.y1x_l}
$y^m\,\ell_i$ $(i>m\geq 1),$\quad $\ell_i$ $(i\geq 1),$ \quad
$x^m\,\ell_i$ $(i,m\geq 1).$
\end{minipage}\end{equation}
\begin{equation}\begin{minipage}[c]{35pc}\label{d.y1x_r}
$y^m\,r_i$ $(i,m\geq 1),$\quad $r_i$ $(i\geq 1),$ \quad
$x^m\,r_i$ $(i>m\geq 1).$
\end{minipage}\end{equation}
\begin{equation}\begin{minipage}[c]{35pc}\label{d.y1x_rl}
$y^m\,r_i\,\ell_j$ $(i,m\geq 1;\,j>m),$\quad
$r_i\,\ell_j$ $(i,j\geq 1),$\quad
$x^m\,r_i\,\ell_j$ $(j,m\geq 1;\,i>m).$
\end{minipage}\end{equation}

In fact, we have

\begin{lemma}\label{L.basis}
The elements listed in~\eqref{d.y1x}-\eqref{d.y1x_rl}
form a $\!k\!$-basis of $k\,\F.$
\end{lemma}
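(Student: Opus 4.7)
The plan is to leverage the faithful action of $k\,\F$ on $V$ established in Theorem~\ref{T.main}. I will prove spanning by explicit reduction of the $\F$-normal forms and linear independence by evaluating operators on the basis vectors $b_{n,h}$.

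For spanning, Lemma~\ref{L.monoid} reduces the task to showing that every $\F$-monomial $x^iy^jx^k$ with $0\leq i,k\leq j$ is a $k$-linear combination of listed elements. Summing~\eqref{d.l_i,r_i=} telescopically yields the two identities $x^jy^j=1-\sum_{a=1}^{j}\ell_a$ and $y^jx^j=1-\sum_{a=1}^{j}r_a$. I would first factor $x^iy^jx^k=x^iy^{j-k}\cdot y^kx^k=x^iy^{j-k}-\sum_{b=1}^{k}x^iy^{j-k}r_b$ using the second identity, and then expand $x^iy^{j-k}$ by cases on the sign of $i+k-j$. If $i+k\leq j$, set $p=j-i-k\geq 0$ and write $x^iy^{j-k}=(x^iy^i)y^p=y^p-\sum_by^p\ell_{b+p}$ via the first identity and the commutation $\ell_by^p=y^p\ell_{b+p}$ from~\eqref{d.l_ix_etc}; multiplying through and using $\ell_br_a=r_a\ell_b$ from~\eqref{d.ll,rr,rl} produces terms of the shapes $y^p,\,y^p\ell_b,\,y^pr_a,\,y^pr_a\ell_b$ (all with the required $b>p$), which are precisely the listed forms in~\eqref{d.y1x}--\eqref{d.y1x_rl}. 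If instead $i+k>j$, set $s=i+k-j\geq 1$ and write $x^iy^{j-k}=x^s(x^{j-k}y^{j-k})=x^s-\sum_ax^s\ell_a$; after multiplying out and commuting $\ell$ past $r$, the products $x^sr_b$ and $x^sr_b\ell_a$ with $b\leq s$ vanish (by iterated use of $xr_1=0$ together with $xr_i=r_{i-1}x$ from~\eqref{d.l_ix_etc}), while those with $b>s$ already match the listed forms $x^m\ell_i,\,x^mr_i,\,x^mr_i\ell_j$. A direct check of index constraints confirms that nothing falls outside the list.

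For linear independence, suppose $\sum_T\alpha_TT=0$ has finite support, and let $M$ bound all indices occurring with $\alpha_T\neq 0$. Using~\eqref{d.x,y} and~\eqref{d.l_i,r_i} I would carry out a case analysis on $h'-h$ to show that, for every triple $(N,h,h')$ with $N\geq h\geq 1$, exactly four listed operators contribute coefficient~$1$ to the $b_{N,h'}$-component of $T(b_{N,h})$: for $h'=h$ the four are $1,\,\ell_h,\,r_{N-h+1},\,r_{N-h+1}\ell_h$; for $h'=h-m$ with $1\leq m<h$ they are $y^m,\,y^m\ell_h,\,y^mr_{N-h+1},\,y^mr_{N-h+1}\ell_h$; and for $h'=h+m$ with $1\leq m\leq N-h$ they are the analogues with $x^m$ in place of $y^m$. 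Taking $N-h+1>M$ kills the last two terms of each resulting four-term equation; then taking the remaining inner index $h$ above $M$ eliminates the third, forcing $\alpha_1=0$ (and similarly $\alpha_{y^m}=\alpha_{x^m}=0$). Substituting back leaves two-term equations of the form $\alpha_{r_i}+\alpha_{r_i\ell_h}=0$ (and their $y^m$- and $x^m$-prefixed analogues); one more round of letting $h$ exceed $M$ forces $\alpha_{r_i}=0$, and then $\alpha_{r_i\ell_h}=0$ for all $h$. The same pattern disposes of the remaining families.

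The main obstacle is the bookkeeping in the spanning step: one must verify, in each of the two reduction cases and their degenerate subcases ($i=0$, $k=0$, $i=j-k$, $i=j$, $j=k$), that the indices produced satisfy \emph{precisely} the constraints in~\eqref{d.y1x}--\eqref{d.y1x_rl} ($b>p$ in $y^p\ell_b$, $i>m$ in $x^mr_i$, $j>m$ in $y^mr_i\ell_j$, etc.), so that nothing falls out of the listed range and nothing in the list is missed. The linear-independence step, though notationally involved, is essentially a direct extraction of matrix entries from explicit formulas.
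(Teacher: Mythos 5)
Your proposal is correct and follows essentially the same route as the paper: spanning via the reduction relations \eqref{d.xy,yx}--\eqref{d.ll,rr,rl} (which the paper asserts just before stating the lemma, whereas you make the reduction explicit by telescoping \eqref{d.l_i,r_i=} against the normal forms of \eqref{d.many_rels}), and linear independence by evaluating the operators on the basis $\{b_{n,h}\}$ of $V$ and exploiting the progressively smaller ``supports'' of the four families \eqref{d.y1x}--\eqref{d.y1x_rl}. The paper organizes the independence step as the choice of a single witness basis vector for each nonzero combination rather than as elimination in your system of four-term matrix-entry equations, but the underlying mechanism---pushing $n$ and the relevant index past the finitely many terms involved so as to decouple the families---is the same.
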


\begin{proof}
We have seen that the elements~\eqref{d.y1x}-\eqref{d.y1x_rl}
span $k\,\F,$ so it will
suffice to show that every nontrivial $\!k\!$-linear combination
$f$ of those elements has nonzero action on $V.$

Suppose first that $f$ involves at least one of the
elements in~\eqref{d.y1x}, say $u\in\{y^m,\,1,\,x^m\},$
with nonzero coefficient in $k.$
I claim that we can find an element $b_{n,i}$ in our basis for $V$
which is annihilated by all the monomials occurring in $f$ that lie
in~\eqref{d.y1x_l}-\eqref{d.y1x_rl}, but not by $u.$
Indeed, for each $n>0,$ each term in~\eqref{d.y1x_l}-\eqref{d.y1x_rl}
has nonzero action on at most one of $b_{n,1},\dots,b_{n,n},$
while the number of members of each such set annihilated by
our element $u$ is bounded, independent of $n,$
by the exponent on $x$ or $y$ in $u$ (if any).
Hence for large enough $n,$ there exists a $b_{n,i}$
which is neither an element on which the finitely many terms of $f$
in~\eqref{d.y1x_l}-\eqref{d.y1x_rl}
have nonzero value, nor one annihilated by $u;$
let us choose any such $b_{n,i}.$
Note that each member of~\eqref{d.y1x} other than $u$
which is nonzero on $b_{n,i}$ carries it to a basis
element other than $u(b_{n,i}),$ since
by~\eqref{d.x,y}, distinct members of~\eqref{d.y1x}
shift second subscripts of the $b_{n,i}$ by different amounts.
Hence in $f(b_{n,i}),$ the basis-element $u(b_{n,i})$ has nonzero
coefficient; so $f$ has nonzero action on $V.$

Suppose, next, that $f$ involves no members of~\eqref{d.y1x},
but has some member of~\eqref{d.y1x_l} or~\eqref{d.y1x_r}
with nonzero coefficient.
Assume without loss of generality that it involves
an element $u$ of~\eqref{d.y1x_l}, and let $\ell_i$ be the
$\!\ell\!$-factor occurring.
Thus, $u$ acts nontrivially only on basis elements
$b_{n,i}$ $(n\geq i),$
and I claim we can find some $n$ for which $u,$ but none
of the elements of~\eqref{d.y1x_r} or~\eqref{d.y1x_rl} occurring
in $f,$ does so.
Note that $u$ annihilates $b_{n,i}$ for at most finitely many $n.$
Each element of~\eqref{d.y1x_r} acts nontrivially on members of
our basis that are a fixed distance from the {\em right} end of
the families $\{b_{n,1},\dots,b_{n,n}\},$ so for $n$ large
enough, none of the finitely many such elements
which occur in $f$ will act nontrivially on the $\!i\!$-th element
from the {\em left}.
Moreover, each member of~\eqref{d.y1x_rl}
is nonzero on $b_{n,i}$ for at most one $n.$
Hence for all but finitely many $n,$ the element $b_{n,i}$
has nonzero image under $u$ but not under any of the
members of~\eqref{d.y1x_r} or~\eqref{d.y1x_rl} occurring in $f;$
let us pick such a $b_{n,i}.$
Of the other members of~\eqref{d.y1x_l} occurring in $f,$
those that involve $\ell_j$ for some $j\neq i$ annihilate $b_{n,i},$
while those that involve $\ell_i$ but begin with a different
factor $x^m,$ $1$ or $y^m$ send $b_{n,i}$ to
a different basis element.
So again, the basis element $u(b_{n,i})$ has nonzero coefficient
in $f(b_{n,i}),$ so $f$ has nonzero action on~$V.$

Finally, if $f$ involves no members of~\eqref{d.y1x}-\eqref{d.y1x_r},
we take a member $u$ of~\eqref{d.y1x_rl} that it involves, and let
$r_i \ell_j$ be the factors other than a power of $x$ or $y$
in that operator.
Then $u(b_{i+j-1,j})\neq 0,$ and we easily check that
all other elements of~\eqref{d.y1x_rl} (which are the only
other terms that can appear in $f)$ either
annihilate $b_{i+j-1,j},$ or send it to a basis element
different from the one to which $u$ sends it.
Hence $f(b_{i+j-1,j})\neq 0,$
so in this case, too, $f$ has nonzero action on~$V,$
completing the proof of the lemma.
\end{proof}

Although $\F$ has trivial center, $k\,\F$ has nontrivial
central idempotents.
Namely, for each $n\geq 1,$ the element
\begin{equation}\begin{minipage}[c]{35pc}\label{d.p_n}
$p_n\ =\ r_n\ell_1 + r_{n-1}\ell_2 + \dots + r_1\ell_n$
\end{minipage}\end{equation}
acts on $V$ by projection to $V_n;$ and since all members of
$k\,\F$ carry each $V_m$ into itself,
$p_n$ commutes with all such elements.
(For results on the centers of monoid algebras of
free inverse monoids on more than one generator, see~\cite{C+M}.)

The above material on the structure of $k\,\F$ has considerable
overlap with \cite[\S 4]{PA+KG}.
In particular, our $\ell_i r_j$ and $p_n$ would be, in the
notation of display~(4.4) and Lemma~4.3 of that
note, $q_{-i+1,j-1},$ and $h_{n-1};$
and Proposition~4.5 of that paper
shows, inter alia, that the latter elements generate the
socle of $k\,\F$ (which is equivalent to saying that
the socle is spanned over $k$ by the former elements).
\vspace{0.5em}

In the above discussion, we have been regarding $k\,\F$ as an
algebra of operators on the space $V=\bigoplus V_n,$
but the results on its structure that we have
deduced necessarily hold for the abstract
algebra $k\,\F;$ so we can look at the elements considered
above in connection with any action of $k\,\F$ on any vector space.
For instance, if $x$ is an {\em invertible} endomorphism of a
vector space $V,$ and $y$ its inverse, then we get an action
in which, it is easy to see, all $\ell_i$ and $r_i$ act trivially
(i.e., as the zero operator).
If $x$ acts by a one-to-one but not
invertible endomorphism of a space $V,$ and we let
$y$ act by a left inverse thereof, we get an action
under which the $r_i$ are trivial, but not all the $\ell_i;$
and we have the obvious dual
statement if $x$ is surjective but not one-to-one.
In these cases, all the products $r_i\,\ell_j,$
and hence the central idempotents $p_n,$ act trivially.

Note that there is a homomorphism $\F\to\Z$
taking $x$ to $1$ and $y$ to $-1,$ and that this induces
a grading on $k\,\F,$ under which the
basis elements~\eqref{d.y1x}-\eqref{d.y1x_rl} are homogeneous:
those whose expressions begin with $y^m$ have degree $-m,$
those beginning with $x^m$ have degree $+m,$
and those beginning with neither have degree $0.$
The homogeneous component of $k\,\F$ of degree $0$ is commutative,
generated over $k$ by the idempotent elements $\ell_i$ and $r_i.$

The following result, which relates the behavior of
$r_i$ and $\ell_i$ with that of $x$ under an arbitrary
action of $k\,\F$ on a $\!k\!$-vector space,
will be used in the next section.

\begin{lemma}\label{L.x_alone}
Let $V$ be a $\!k\!$-vector space given with a left action
of $k\,\F,$ and $i$ a positive integer.
Then for each $i\geq 1,$
\vspace{.2em}

\textup{(i)} \ The action of
the projection map $r_i$ on $V$ annihilates the subspace
$\r{ker}(x^{i-1}),$ and has as image a complement of that
subspace in the \textup{(}generally larger\textup{)}
subspace $\r{ker}(x^i).$
\vspace{.2em}

\textup{(ii)} The action of
the projection map $\ell_i$ on $V$ annihilates the
subspace $\r{im}(x^i),$ and has as image a complement of that
subspace in the \textup{(}generally larger\textup{)}
subspace $\r{im}(x^{i-1}).$
\vspace{.2em}

\textup{(}In these statements, $x^0$ is understood to be
the identity operator.
Thus, in the $i=1$ case of~\textup{(i)},
since $\r{ker}(\r{id})=0,$ the conclusion simply
means that the projection $r_1$ has image $\r{ker}(x);$
and in~\textup{(ii)}, since $\r{im}(\r{id})=V,$ the
conclusion means that $\ell_1$ is a projection of $V$
along $\r{im}(x)$ onto a complement of that subspace.\textup{)}

The corresponding statements hold with $x$
replaced by $y,$ and the roles of $\ell_i$ and $r_i$ interchanged.
\end{lemma}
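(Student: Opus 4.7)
The plan is to work directly from the explicit formulas
$r_i = y^{i-1}x^{i-1} - y^i x^i$ and $\ell_i = x^{i-1}y^{i-1} - x^i y^i$
of~\eqref{d.l_i,r_i=}, combined with the idempotency $r_i^2 = r_i$ and $\ell_i^2 = \ell_i$ from~\eqref{d.ll,rr,rl}, and a handful of normal-form reductions coming from~\eqref{d.many_rels}. Since $r_i$ and $\ell_i$ are idempotents in $k\F$, their actions on $V$ are projections, so the moment I pin down each kernel and image the promised direct-sum decomposition of $V$ is automatic; the content of the lemma is the placement of these subspaces relative to the kernels and images of powers of $x$.

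For (i), three short computations suffice. First, each summand of $r_i$ ends on the right with $x^{i-1}$ or $x^i$, so $r_i$ kills $\r{ker}(x^{i-1})$ on sight. Second, I would show $x^i r_i = 0$: the summand $x^i y^i x^i$ collapses to $x^i$ by~\eqref{d.many_rels}, and the other summand reduces similarly after peeling off one $x$ on the left and applying the case $(i{-}1,i{-}1,i{-}1)$ of~\eqref{d.many_rels}. This gives $\r{im}(r_i) \subseteq \r{ker}(x^i)$. Third, a similar reduction yields $x^{i-1} r_i = x^{i-1} - y x^i$; applied to any $v\in\r{ker}(x^i)$ this says $x^{i-1}(v - r_i v) = 0$, so $v - r_i v \in \r{ker}(x^{i-1})$. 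Since $r_i$ is idempotent and vanishes on $\r{ker}(x^{i-1})$, the subspaces $\r{im}(r_i)$ and $\r{ker}(x^{i-1})$ meet trivially, completing the asserted decomposition of $\r{ker}(x^i)$.

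The proof of (ii) runs in parallel. The factorization $\ell_i = x^{i-1}(y^{i-1} - x y^i)$ shows $\r{im}(\ell_i) \subseteq \r{im}(x^{i-1})$. A computation of the same flavor as in (i) (using $x^i y^i x^i = x^i$ and the rewriting $x^{i-1}y^{i-1}x^i = (x^{i-1}y^{i-1}x^{i-1})\,x = x^i$) gives $\ell_i x^i = 0$, so $\ell_i$ annihilates $\r{im}(x^i)$. Finally, for $v = x^{i-1}u \in \r{im}(x^{i-1})$, the case $(i,i,i{-}1)$ of~\eqref{d.many_rels} supplies $x^i y^i x^{i-1} = x^i y$, from which $v - \ell_i v = x^i y u \in \r{im}(x^i)$; and $\r{im}(\ell_i) \cap \r{im}(x^i) = 0$ follows from $\ell_i^2 = \ell_i$ together with $\ell_i x^i = 0$. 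The concluding sentence of the lemma is then a bookkeeping observation: swapping $x$ and $y$ interchanges $r_i$ with $\ell_i$ in~\eqref{d.l_i,r_i=}, so (i) and (ii) imply each other's $y$-version.

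I do not anticipate any real obstacle. The only mild subtlety is that some of the products one needs to simplify, such as $x^i y^{i-1} x^{i-1}$ and $x^{i-1} y^{i-1} x^i$, do not satisfy the inequalities $0 \leq i \leq j$, $0 \leq k \leq j$ in~\eqref{d.many_rels} as written, so one must first split off an $x$ on the appropriate side before the normal-form relation applies.
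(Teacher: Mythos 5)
Your proposal is correct and takes essentially the same route as the paper's proof: idempotency makes $r_i$ and $\ell_i$ act as projections, the annihilation and image-containment statements are read off from the formulas $r_i = y^{i-1}x^{i-1}-y^ix^i$ and $\ell_i = x^{i-1}y^{i-1}-x^iy^i$ together with the normal-form relations~\eqref{d.many_rels}, and the complement claims follow. The only cosmetic differences are that the paper cites the previously established identities~\eqref{d.=0} and~\eqref{d.l_ix_etc} for $x^i r_i = 0$ and for $\ell_i$ killing $\mathrm{im}(x^i)$ instead of rederiving them, and it phrases the complement step as showing that the elements of the relevant subspace annihilated by the projection are exactly the smaller subspace, rather than writing out the decomposition $v = r_i v + (v - r_i v)$ explicitly.
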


\begin{proof}
Since $r_i$ and $\ell_i$ are idempotent elements of $k\,\F,$
they act on $V$ by projection operators.

Since $r_i = y^{i-1} x^{i-1} - y^i x^i,$ this element is right
divisible by $x^{i-1},$ hence annihilates $\r{ker}(x^{i-1});$ while
its image lies in $\r{ker}(x^i)$ by the first equation of~\eqref{d.=0}.
Since it is a projection, to show that its image
is a complement of $\r{ker}(x^{i-1})$ in $\r{ker}(x^i),$ it remains
only to show that the only elements of $\r{ker}(x^i)$ annihilated
by $r_i$ are the elements of $\r{ker}(x^{i-1}).$
And indeed, if $v\in\r{ker}(x^i)$ is annihilated by
$r_i = y^{i-1} x^{i-1} - y^i x^i,$ then it is
annihilated by $y^{i-1} x^{i-1},$ hence by that element's left
multiple $x^{i-1} y^{i-1} x^{i-1} = x^{i-1},$ as required.

Turning to $\ell_i,$ this annihilates
$\r{im}(x^i)$ by repeated application of the
first equation of~\eqref{d.l_ix_etc}.
Since $\ell_i = x^{i-1} y^{i-1} - x^i y^i,$ it is
left divisible by $x^{i-1},$ so its image lies in $\r{im}(x^{i-1}).$
Thus, as above, it remains to show that any element of $\r{im}(x^{i-1})$
annihilated by $\ell_i$ lies in $\r{im}(x^i).$
And indeed, the condition that an element
$x^{i-1}(v)\in\r{im}(x^{i-1})$ be annihilated
by $\ell_i$ is $x^{i-1} y^{i-1} x^{i-1}(v) - x^i y^i x^{i-1}(v) = 0.$
Applying~\eqref{d.many_rels} to each term of this relation,
it becomes $x^{i-1}(v) - x^i y(v) = 0,$ showing that
$x^{i-1}(v)$ indeed lies in $\r{im}(x^i).$

The final assertion is clear by symmetry.
(We will not need it below.)
\end{proof}

The referee has asked whether the ideas of this note might be
of use in connection with the question of whether all von~Neumann
regular satisfy the {\em separativity} condition on their monoids of
finitely generated projective modules (see~\cite{PA}).
My one attempt to contribute to that problem was~\cite{u_inner_inv},
which examines the ring $R'$ obtained by universally adjoining to
a $\!k\!$-algebra $R$ an inner inverse to an element $x\in R.$
The hope was that by applying that construction recursively to
an algebra with non-separative monoid of projectives, one might
get a regular $\!k\!$-algebra with the same property.
But though results were obtained on the element-structures of
$\!R'\!$-modules $M\otimes_R R',$ there was no evident way to go from
these to results on the monoid of finitely generated projectives.

One might similarly study the result of adjoining to
a $\!k\!$-algebra $R$ a universal
{\em strong} inner inverse to an element $x\in R,$ with the same goal.
I do not know whether such an approach
would have a better chance of success.

\section{Inner inverses satisfying other systems of relations}\label{S.var}

In this section, we will regard the monoid
relations~\eqref{d.many_rels},
\eqref{d.few_rels1}, \eqref{d.few_rels2} by which we have
defined $\F$ as $\!k\!$-algebra relations defining $k\,\F.$
It is natural to ask whether the relations satisfied by $x$
and $y$ in $k\,\F$ comprise the ``strongest'' family of
$\!k\!$-algebra relations that one can force an inner inverse $y$ of
a general element $x$ of a ``good'' regular $\!k\!$-algebra
(such as an infinite product of full matrix algebras over $k)$ to have.

An easy observation which suggests a negative answer is the following:
It is known that for all $n,$ the ring $R$ of $n\times n$
matrices over $k$ is {\em unit regular,} i.e., that
for every $x$ there exists an {\em invertible}
$y$ such that $xyx = x$ \cite[Lemma~1.3]{KG}.
Now if $x$ is not invertible and $y$ is, we can't have $yxy = y;$
so for noninvertible $x,$ the condition that $y$ be invertible is
incompatible with the relations~\eqref{d.many_rels}.
Unfortunately, the condition that $y$ be invertible is not
a $\!k\!$-algebra relation in $x$ and $y;$ so the above does not
contradict the possibility that the relations defining $k\,\F$ might be
the strongest set of $\!k\!$-algebra relations in those two
elements that we can force an inner inverse $y$ to have in $R.$

We shall see at the end of this section that a modification of
the above idea does work.
But let us first show that the set of $\!k\!$-algebra relations
defining $k\,\F,$ even if not a {\em greatest} element in the
partially ordered set of families of relations that can be
so forced, is a {\em maximal} element.
For this, we will need

\begin{corollary}[to Lemma~\ref{L.x_alone}]\label{C.x_alone}
An action of $k\,\F$ on a vector space $V$ is faithful
if and only if for all positive integers $i,$
\begin{equation}\begin{minipage}[c]{35pc}\label{d.x_alone}
$\r{im}(x^i)\,\cap\,\r{ker}(x)\ \neq
\ \r{im}(x^{i-1})\,\cap\,\r{ker}(x).$
\end{minipage}\end{equation}
\end{corollary}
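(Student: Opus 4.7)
The plan is to reduce faithfulness to a statement about the family $\{r_i\ell_1 : i \geq 1\}$ of basis elements from Lemma~\ref{L.basis}, and then build a ``standard copy'' of $\bigoplus_n V_n$ inside $V$ using the central idempotents $p_n$.

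\emph{Step 1 (the bridge).} For each $i \geq 1,$ I claim $r_i\ell_1$ acts nontrivially on $V$ if and only if \eqref{d.x_alone} holds at index $i$. Lemma~\ref{L.x_alone}(i) makes $x^{i-1}$ restrict to a vector-space isomorphism $R_i \to \r{im}(x^{i-1}) \cap \r{ker}(x),$ where $R_i = r_i(V);$ part~(ii) at $i=1$ gives $\r{ker}(\ell_1) = \r{im}(x).$ For the forward direction, suppose $b := \ell_1 r_i(v) \neq 0.$ Then $b \in L_1 \cap R_i$ (using $r_i\ell_1 = \ell_1 r_i$). If we had $x^{i-1}(b) = x^i(u),$ then $b - x(u) \in \r{ker}(x^{i-1}) \subseteq \r{ker}(r_i),$ so $b = r_i(b) = r_ix(u) = xr_{i+1}(u) \in \r{im}(x) = \r{ker}(\ell_1),$ contradicting $b \in L_1;$ hence $x^{i-1}(b)$ witnesses \eqref{d.x_alone}. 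Conversely, any witness in $\r{im}(x^{i-1}) \cap \r{ker}(x) \setminus \r{im}(x^i)$ lifts through the isomorphism to an $r \in R_i$ with $r \notin \r{im}(x) = \r{ker}(\ell_1),$ so $\ell_1(r) = \ell_1 r_i(r) \neq 0.$

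The ``only if'' direction of the corollary now follows at once: if the action on $V$ is faithful, each of the distinct basis elements $r_i\ell_1$ of Lemma~\ref{L.basis} acts nontrivially on $V,$ so \eqref{d.x_alone} holds for all $i.$

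\emph{Step 2 (the hard direction).} Assume \eqref{d.x_alone} for every $i.$ Step~1 produces, for each $n \geq 1,$ a nonzero $b_n \in L_1 \cap R_n.$ The plan is to show that the $k\F$-submodule $W_n$ generated by $b_n$ is isomorphic to the model $V_n$ of Section~\ref{S.more} via $b_n \leftrightarrow b_{n,1}.$ The key identity is $y\ell_1 = 0,$ obtained from the $i=1$ case of $\ell_{i-1}y = y\ell_i$ in \eqref{d.l_ix_etc} combined with the convention $\ell_0 = 0;$ this forces $y(b_n) = 0.$ Together with $b_n \in R_n \subseteq \r{ker}(x^n),\ x^{n-1}(b_n) \neq 0,$ and the relations \eqref{d.l_ix_etc}, \eqref{d.ll,rr,rl}, a routine check then shows that each element of $k\F$ acts on $W_n = \r{span}\{x^h(b_n) : 0 \leq h < n\}$ exactly as on $V_n.$ Finally, the central idempotents $p_m$ of \eqref{d.p_n} are pairwise orthogonal (immediate from \eqref{d.ll,rr,rl}), and one verifies $p_m(b_n) = \delta_{mn}\,b_n$ (using $\ell_k(b_n) = 0$ for $k \neq 1$ and $r_k(b_n) = 0$ for $k \neq n$); hence $W_n \subseteq p_n V,$ so the $W_n$ lie in direct sum, and $\bigoplus_n W_n \cong \bigoplus_n V_n$ is a $k\F$-submodule on which $k\F$ acts faithfully by Theorem~\ref{T.main}. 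A faithful submodule forces the action on $V$ to be faithful.

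The main obstacle is Step~2: one must upgrade the pointwise nontriviality of each $r_i\ell_1$ into a global $k\F$-module embedding of $\bigoplus_n V_n$ into $V.$ The crucial ingredient making this work is the identity $y\ell_1 = 0,$ which cleanly truncates the submodule generated by $b_n$ at dimension $n$ and makes $b_n$ behave like the leftmost basis vector $b_{n,1}$ of the model $V_n.$
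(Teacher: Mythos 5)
Your proposal is correct, and it splits into a half that matches the paper and a half that takes a genuinely different route. Your Step~1 is essentially the paper's own bridge between condition~\eqref{d.x_alone} and the nonvanishing of a socle element, proved the same way from Lemma~\ref{L.x_alone}; the only (harmless) difference is that you use $r_i\ell_1=\ell_1 r_i$ (which in the standard module picks out $b_{i,1}$) where the paper uses $\ell_i r_1$ (which picks out $b_{i,i}$), and your necessity argument via Lemma~\ref{L.basis} likewise agrees with the paper's. For sufficiency, however, the paper argues ideal-theoretically: the kernel of the action is a two-sided ideal, and every nonzero two-sided ideal of $k\,\F$ is asserted (``easy to verify'' from the basis and~\eqref{d.l_ix_etc}) to contain some $\ell_i r_1$, so if all these elements act nontrivially the kernel is zero. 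You instead take the nonzero elements $b_n\in L_1\cap R_n$ supplied by Step~1 and build an explicit embedded copy of the standard faithful module $\bigoplus_n V_n$, using $y\ell_1=0$, the identities $r_1x^{h-1}=x^{h-1}r_h$ and $r_h(b_n)=0$ for $h\neq n$ to verify the shift action, and the central idempotents $p_n$ to get directness of the sum, then quoting the faithfulness statement of Theorem~\ref{T.main}. Both routes are valid; I checked the ``routine'' verifications in your Step~2 and they go through. Your version is longer but replaces the paper's unproved claim about ideals of $k\,\F$ with a concrete computation, and it yields slightly more, namely that any $k\,\F\!$-module satisfying~\eqref{d.x_alone} for all $i$ contains a copy of $\bigoplus_n V_n$; the paper's version is shorter once the ideal lemma is granted and isolates the structural reason the conditions suffice (the elements $\ell_i r_1$ meet every nonzero ideal).
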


\begin{proof}
Since the left-hand side of~\eqref{d.x_alone} is contained
in the right-hand side, it suffices to show that the
given action is faithful if and only if for every $i,$
there is an element of the right-hand side of~\eqref{d.x_alone}
that is not in the left-hand side.

It is easy to verify that every nonzero two-sided ideal of $k\,\F$
contains at least one of the elements~\eqref{d.y1x_rl}; and
with the help of~\eqref{d.l_ix_etc}, one can deduce that
every such ideal contains an element $\ell_i r_1$ $(i\geq 1).$
We shall now show that for each $i,$
the existence of an element belonging to the right-hand
side of~\eqref{d.x_alone} but not the left is equivalent to the
condition that $\ell_i r_1$ act nontrivially on $V,$
i.e., {\em not} be in the kernel of our action.
Thus, the inequalities~\eqref{d.x_alone} will hold for all $i$
if and only if that kernel is zero, giving the desired result.

Suppose first that some $v\in V$ belongs to
$\r{im}(x^{i-1})\,\cap\,\r{ker}(x)$ but not to
$\r{im}(x^i)\,\cap\,\r{ker}(x).$
Since $v\in\r{ker}(x),$ we see from the parenthetical
statement in Lemma~\ref{L.x_alone}
that it is fixed under $r_1,$ while since it lies
in $\r{im}(x^{i-1})$ but not in $\r{im}(x^i),$ statement~(ii) of
that lemma shows that it is not annihilated by $\ell_i.$
Hence $\ell_i r_1(v)\neq 0,$ so $\ell_i r_1$ indeed acts nontrivially.

Conversely, suppose $\ell_i r_1$ acts nontrivially on some $v\in V.$
Then I claim that $\ell_i r_1(v)$ lies in the right-hand side
of~\eqref{d.x_alone} but not in the left-hand side.
Indeed, since $\ell_i r_1(v)$ is a nonzero
element of $\ell_i(V),$ Lemma~\ref{L.x_alone}(ii) tells us that it
belongs to $\r{im}(x^{i-1})$ but not to $\r{im}(x^i),$
while since $\ell_i r_1 = r_1\ell_i,$ it also lies
in $r_1(V)=\r{ker}(x),$ as required.
\end{proof}

We deduce

\begin{proposition}\label{P.max}
Let $R$ be a $\!k\!$-algebra, and $\varphi$ and $\varphi'$
$\!k\!$-algebra homomorphisms $k\,\F\to R$
such that $\varphi(x)=\varphi'(x).$
Then $\varphi$ is one-to-one if and only if $\varphi'$ is.

Hence no {\em proper} homomorphic image $(k\,\F)/I$
has the property that for every element $x$ in a direct product
$R$ of full matrix rings over $k,$ there exists $y\in R$ which,
with $x,$ satisfies the relations of $(k\,\F)/I.$

Thus, the set of $\!k\!$-algebra relations satisfied by $x$ and $y$
in $k\,\F$ is maximal among sets $S$ of $\!k\!$-algebra
relations in two noncommuting indeterminates such that
for every element $x$ in a direct product $R$ of full
matrix algebras, one can find a $y\in R$
such that $x$ and $y$ together satisfy $S.$
\end{proposition}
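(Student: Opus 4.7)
The plan is to deduce all three assertions from Corollary~\ref{C.x_alone}, which detects whether a $k\F$-action on a vector space is faithful by a condition that involves only the element $x$.

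For the first assertion, I would observe that a $k$-algebra homomorphism $\psi\colon k\F\to R$ is injective if and only if the induced action of $k\F$ on the $k$-vector space $R$ by left multiplication through $\psi$ is faithful (the nontrivial direction holds because any $a\in\ker(\psi)$ annihilates $1\in R$). Under this action, the element $x\in k\F$ acts as left multiplication by $\psi(x)$, so the subspaces $\r{im}(x^i)$ and $\r{ker}(x^i)$ of $R$ appearing in condition~\eqref{d.x_alone} are determined by $\psi(x)$ alone. Applying Corollary~\ref{C.x_alone} with $V=R$ therefore shows that injectivity of $\psi$ depends only on $\psi(x)$; so when $\varphi(x)=\varphi'(x)$, $\varphi$ is one-to-one if and only if $\varphi'$ is.

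For the second assertion, I would specialize to the setup of the second paragraph of Theorem~\ref{T.main}: let $V=\bigoplus_{n\geq 1}V_n$, where each $V_n$ is $\!x\!$-basic of dimension $n$, with $x,y$ acting as the shift maps of~\eqref{d.x,y}. Since $x$ and $y$ each preserve every $V_n$, they lie in the subring $R=\prod_{n\geq 1}\r{End}(V_n)\cong\prod_{n\geq 1}M_n(k)$, which is a direct product of full matrix algebras over $k$; and Theorem~\ref{T.main} yields an injective $k$-algebra homomorphism $\varphi\colon k\F\hookrightarrow R$ with $x\mapsto x,\ y\mapsto y$. Suppose for contradiction that $I$ is a nonzero two-sided ideal of $k\F$ such that $(k\F)/I$ has the stated realization property. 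Applied to our particular $x$ and $R$, the hypothesis furnishes a $y'\in R$ for which $(x,y')$ satisfies the defining relations of $(k\F)/I$, hence a $k$-algebra homomorphism $\varphi'\colon k\F\to R$ with $\ker(\varphi')\supseteq I$ and $\varphi'(x)=x=\varphi(x)$. By the first assertion, $\varphi'$ is then injective, forcing $I=0$, a contradiction.

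The third assertion follows by direct reformulation: a system $S$ of $k$-algebra relations in two noncommuting indeterminates that properly contains the relations satisfied by $x,y$ in $k\F$ corresponds to a proper quotient $(k\F)/I$ with $I\neq 0$, which by the second assertion fails the realization property. The main potential obstacle is a small piece of bookkeeping: confirming that the particular $x,y$ from Theorem~\ref{T.main} really do lie in the subring $R=\prod_n\r{End}(V_n)$ of $\r{End}(V)$ (rather than only in the full endomorphism ring), and that the equivalence between injectivity of $\psi\colon k\F\to R$ and faithfulness of its left-multiplication action on $R$ is correctly invoked when $R$ is regarded merely as a $k$-vector space.
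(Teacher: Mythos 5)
Your proposal is correct and follows essentially the same route as the paper: deduce the first assertion from Corollary~\ref{C.x_alone} after embedding $R$ in some $\r{End}_k(V)$ (the paper leaves the embedding unspecified; your left-regular-representation choice is the natural way to realize it), then apply it to the faithful representation of $k\,\F$ on $\bigoplus_n V_n$ from the second paragraph of Theorem~\ref{T.main} to rule out proper quotients, and reformulate to get the maximality statement. The only quibble is a harmless slip in your parenthetical (it is the containment ``kernel of the action $\subseteq\ker(\psi)$'' that uses evaluation at $1\in R$), which does not affect the argument.
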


\begin{proof}
Embedding $R$ in an algebra of the form $\r{End}(V),$
we get from the preceding corollary a condition
which is necessary and sufficient
both for $\varphi$ and for $\varphi'$ to be one-to-one,
in terms of the action of $x$ on $V.$
Since $\varphi(x)=\varphi'(x),$ this yields the first assertion.

We know from the second paragraph of Theorem~\ref{T.main}
that there exist elements $x$ and $y$ in an infinite direct product of
matrix rings over $k$ which satisfy the relations of $k\,\F$ but no
others, hence the preceding assertion
shows that for such $x,$ there exists no
$y'$ such that $x$ and $y'$ satisfy the relations of a proper
homomorphic image $(k\,\F)/I$ of $k\,\F.$

The third assertion then follows.
\end{proof}

One can easily strengthen the above proof to show that
for any $\varphi$ and $\varphi'$ as in the first sentence of the
proposition, their kernels have the same intersection with the ideal
of $k\,\F$ spanned by the elements~\eqref{d.y1x_rl}
(the socle of $k\,\F).$
It seems likely that these kernels must in
fact be equal; I leave this for others to investigate.
\vspace{.4em}

Let us now prove, on the other hand, that the set of relations
satisfied by $x$ and $y$ in $k\,\F$ is not the only maximal set
of $\!k\!$-algebra relations that can be forced in this way.
We will use the idea of the second paragraph
of this section in our example, but instead of
trying to make $y$ everywhere invertible, we shall only do this
on $\!x\!$-basic subspaces of one chosen dimension $m.$
To see how, let us again think in terms the action of $k\,\F$
on the vector space $V=\bigoplus V_n$
spanned by elements $b_{n,h},$ as in~\S\ref{S.more}.
If we let
\begin{equation}\begin{minipage}[c]{35pc}\label{d.y'}
$y'\ =\ y\,+\,x^{m-1} \ell_1 r_m,$
\end{minipage}\end{equation}
we see that $y'$ acts like $y$ on the spaces $V_n$ with $n\neq m$
(since $\ell_1 r_m$ acts trivially on those spaces),
but that on $V_m,$ it cyclically permutes the $m$ basis
elements $b_{m,i}.$
Since $x$ does not act invertibly on $V_m,$ neither
does $y'xy',$ so since $y'$ does, we have
\begin{equation}\begin{minipage}[c]{35pc}\label{d.neq}
$y'\ \neq\ y'x\,y'.$
\end{minipage}\end{equation}

On the other hand, the reader can easily verify,
by calculation in $\r{End}(V),$ that
for all natural numbers~$i,$
\begin{equation}\begin{minipage}[c]{35pc}\label{d.y'_like_y}
$(y')^i\,x^i\,=\,y^i\,x^i,$ \quad and \quad $x^i\,(y')^i\,=\,x^i\,y^i.$
\end{minipage}\end{equation}
Hence all $\ell_i$ and $r_i$
are expressible in terms of $x$ and $y'$ by the same
formulas~\eqref{d.l_i,r_i=} that express them in terms of $x$ and $y.$
In particular, the subalgebra of $k\,\F$ generated by $x$ and $y'$
contains all $\ell_i$ and $r_i,$
and so contains $y=y'-x^{m-1} \ell_1 r_m$
(see~\eqref{d.y'}), so it is all of $k\,\F.$
From the $i=1$ case of~\eqref{d.y'_like_y}, we also
see that $x=xy'x;$ i.e., $y'$ is an inner inverse to $x.$

The difference in behavior between $y$ and $y'$ can
be attested concretely by the easily checked relations
\begin{equation}\begin{minipage}[c]{35pc}\label{d.y'_vs_y}
$(y')^m\,p_m\ =\ p_m,$\qquad $y^m\,p_m = 0.$
\end{minipage}\end{equation}
Since $p_m$ is an expression in the $\ell_i$ and $r_i,$
which can be expressed in terms of $x$ and $y'$ by the same formulas
that express them in terms of $x$ and $y,$ the
equations of~\eqref{d.y'_vs_y} can be regarded as contrasting relations
satisfied in $k\,\F$ by $x$ and $y'$
and by $x$ and $y,$ respectively.

Thus, we get

\begin{proposition}\label{P.y'_like_y}
In the $\!k\!$-algebra $k\,\F,$ the element $y'$ given
by~\eqref{d.y'} is another inner inverse to $x,$ which
together with $x$ generates the whole algebra, but which,
by~\eqref{d.y'_vs_y},
satisfies with $x$ a family of relations incomparable
with the family of relations satisfied by $y.$
Hence these two sets of relations are distinct maximal
elements among the families $S$ of $\!k\!$-algebra relations
referred to in the last sentence of Proposition~\ref{P.max}.\qed
\end{proposition}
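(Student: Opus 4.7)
The plan is to assemble the observations established in the paragraphs immediately preceding the statement, with one extra bookkeeping step for the maximality claim. That $y'$ is an inner inverse to $x$ is the $i=1$ case of~\eqref{d.y'_like_y}, which gives $xy' = xy$ and $y'x = yx$, and hence $xy'x = xyx = x$. That $x$ and $y'$ generate $k\,\F$ combines~\eqref{d.y'_like_y} with~\eqref{d.l_i,r_i=}: each $\ell_i$ and $r_i$ is expressed in $x$ and $y'$ by the same polynomial formula that expresses it in $x$ and $y$, so $x^{m-1}\ell_1 r_m$ lies in the subalgebra generated by $x$ and $y'$, and by~\eqref{d.y'} so does $y = y' - x^{m-1}\ell_1 r_m$.

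For the incomparability of the two sets of $k$-algebra relations in noncommuting indeterminates $X,Y$ satisfied by $(x,y)$ and by $(x,y')$ in $k\,\F$, I would exhibit one explicit relation in each direction. The word $YXY - Y$ vanishes on $(x,y)$ but, by~\eqref{d.neq}, not on $(x,y')$. Conversely, substituting the polynomial formulas~\eqref{d.l_i,r_i=} for the $\ell_i$ and $r_i$ into the definition~\eqref{d.p_n} of $p_m$ produces an element $P_m(X,Y) \in k\langle X, Y\rangle$; by~\eqref{d.y'_vs_y}, the relation $Y^m P_m(X,Y) - P_m(X,Y) = 0$ holds on $(x,y')$ in $k\,\F$, whereas on $(x,y)$ the same expression evaluates to $y^m p_m - p_m = -p_m \neq 0$.

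The maximality assertion for $(x,y)$ is exactly Proposition~\ref{P.max}; for $(x,y')$ the same strategy applies. First, as a check that the relation set of $(x,y')$ belongs to the family of Proposition~\ref{P.max}: for any element $X_0$ of any direct product $A$ of full matrix algebras over $k$, Corollary~\ref{C.sum_fd} supplies a strong inner inverse $Y$ to $X_0$ in $A$, hence a $k$-algebra homomorphism $\psi: k\,\F \to A$ with $x \mapsto X_0$ and $y \mapsto Y$; then $Y_0 := \psi(y')$ satisfies with $X_0$ every relation that $(x,y')$ satisfies in $k\,\F$. For maximality itself, note that $x^{m-1}\ell_1 r_m$ carries $V_m$ into itself and annihilates every other $V_n$, so $y'$ stabilizes each summand of $V = \bigoplus_n V_n$ from~\S\ref{S.more}; hence the faithful action of $k\,\F$ on $V$ factors through an injective homomorphism $\varphi: k\,\F \hookrightarrow R := \prod_n \r{End}(V_n)$, itself a direct product of full matrix algebras. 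If a set of relations strictly containing those of $(x,y')$ were realized by $(x|_R, Y_1)$ for some $Y_1 \in R$, the induced homomorphism $k\,\F \to R$ sending $x \mapsto x|_R$ and $y' \mapsto Y_1$ would factor through a proper quotient of $k\,\F$ and hence fail to be injective, while $\varphi$ is injective and sends $x$ to the same element---contradicting the first assertion of Proposition~\ref{P.max}. The only real obstacle is this last diagram chase; the rest is bookkeeping already carried out in the surrounding paragraphs.
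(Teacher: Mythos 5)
Your proposal is correct and follows essentially the same route as the paper, which states this proposition with no separate proof because it is assembled from exactly the observations you cite: the $i=1$ case of~\eqref{d.y'_like_y} for $xy'x=x$, the recovery of $y$ from $y'$ via~\eqref{d.l_i,r_i=} for generation, and~\eqref{d.neq} together with~\eqref{d.y'_vs_y} for incomparability. Your explicit diagram chase for maximality---embedding $k\,\F$ in $\prod_n \r{End}(V_n)$ and invoking the first assertion of Proposition~\ref{P.max}, which depends only on the image of $x$---is precisely the intended application of that proposition, so this is a faithful filling-in rather than a different argument.
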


This leaves open the question of
whether every algebra presented by generators $x$ and $y,$
and a maximal set of relations which include the
relation $x=xyx,$ and which can be ``forced'' in the
sense we have been discussing,
must be isomorphic to $k\,\F$ by an isomorphism fixing $x.$
In addition to questions about relations satisfied by $x$
and $y$ alone, one might want to look at variants, such as
what relations can be forced on an {\em invertible}
inner inverse $y$ to an element $x,$ when one exists.

\section{Acknowledgements}\label{S.akn}
I am indebted to Pace Nielsen, Kevin O'Meara, and Janez \v{S}ter
for stimulating interchanges leading to this material,
to Ben Steinberg and Mark Lawson for information on inverse monoids,
to Warren Dicks, Ken Goodearl, T.\,Y.\,Lam
and Raphael Robinson for further help, and to the
referee for some interesting questions, including two discussed
following Proposition~\ref{P.unit_reg}, and the one mentioned
at the end of~\S\ref{S.more}.

\end{document}